% -*- mode: latex; TeX-PDF-mode: t; -*-
\documentclass[11pt]{article}
\usepackage{amsmath,amsfonts,latexsym, xspace,amsthm,graphicx, amssymb}
\usepackage{enumerate}
\usepackage[letterpaper,left=1.5in,right=1.5in,top=1.5in,bottom=1.5in]{geometry}

\usepackage{url}

\usepackage{mathtools}
\mathtoolsset{showonlyrefs}

\usepackage{color}

\newcommand{\ind}[1]{{\bf 1}_{#1}}
\newcommand{\E}[1]{\mathbb{E} \left(#1\right)}
\newcommand{\Ex}[2]{\mathbb{E}_{#1} \left(#2\right)}
\newcommand{\Var}[1]{\text{Var}(#1)}

\newcommand{\ang}[1]{\langle #1 \rangle}

\def\AA{\mathcal{A}}
\def\BB{\mathcal{B}}

\def\EE{\mathcal{E}}

\def\LL{\mathcal{L}}

\def\PP{\mathcal{P}}

\def\XX{\mathcal{X}}

\def\ol{\overline}

\def\AND{\text{ and }}

\def\F{\Phi}

\def\d{\delta}

\def\e{\varepsilon}
\def\f{\phi}
\def\g{\gamma}
\def\G{\Gamma}

\def\th{\theta}

\def\l{\lambda}
\def\m{\mu}

\def\r{\rho}

\def\t{\tau}
\def\om{\omega}
\def\OM{\Omega}
\newcommand\Prob[1]{{\mbox{Pr}\left\{#1\right\}}}

\newcommand\beq[2]{\begin{equation}\label{#1}#2\end{equation}}

\newtheorem{lemma}{Lemma}
\newtheorem{theorem}{Theorem}
\newtheorem{corollary}{Corollary}
\newtheorem{definition}{Definition}
\newtheorem{claim}{Claim}

\newcommand{\bfrac}[2]{\left({\frac{#1}{#2}}\right)}

\title{The cover time of a biased random walk on a random regular graph of odd degree}

\author{Tony Johansson\thanks{Department of Mathematics, Uppsala University, Uppsala, Sweden. Supported in part by the Knut and Alice Wallenberg Foundation. Email: tony.johansson@math.uu.se}}

\begin{document}
\maketitle

\begin{abstract}
We consider a random walk process which prefers to visit previously unvisited edges, on the random $r$-regular graph $G_r$ for any odd $r\geq 3$. We show that this random walk process has asymptotic vertex and edge cover times $\frac{1}{r-2}n\log n$ and $\frac{r}{2(r-2)}n\log n$, respectively, generalizing the result from \cite{CFJ} from $r = 3$ to any larger odd $r$. This completes the study of the vertex cover time for fixed $r\geq 3$, with \cite{bcf} having previously shown that $G_r$ has vertex cover time asymptotic to $\frac{rn}{2}$ when $r\geq 4$ is even.
\end{abstract}

\section{Introduction}

We consider a biased random walk on the random $r$-regular $n$-vertex graph $G_r$ for any odd fixed $r\geq 5$, i.e. a graph chosen uniformly at random from the set of $r$-regular graph on an even number $n$ of vertices. In short, this is a random walk which chooses a previously unvisited edge whenever possible, and otherwise chooses an edge uniformly at random. See Section \ref{sec:outline} for a precise definition. In \cite{CFJ} it is shown that with high probability, $G_3$ is such that the expected vertex cover time $C_V^b(G_3)$ and expected edge cover time $C_E^b(G_3)$ of the biased random walk satisfy\footnote{We say that $a_n \sim b_n$ if $\lim a_n / b_n = 1$.}
$$
C_V^b(G_3) \sim n\log n, \quad C_E^b(G_3) \sim \frac32 n\log n.
$$
We generalize this result as follows.
\begin{theorem}\label{thm:whptheorem}
Suppose $r\geq 3$ is odd, and let $G_r$ be chosen uniformly at random from the set of $r$-regular graphs on $n$ vertices. Then with high probability, $G_r$ is such that
$$
C_V^b(G_r) \sim \frac{1}{r-2}n\log n, \quad C_E^b(G_r) \sim \frac{r}{2(r-2)}n\log n.
$$
\end{theorem}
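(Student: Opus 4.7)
The plan is to extend the two-phase framework of CFJ from $r=3$ to all odd $r\geq 5$, working throughout in the configuration model and transferring results to $G_r$ by contiguity. The biased walk decomposes naturally into an \emph{exploration phase}, in which the bias rule dictates almost every transition and the walk traces a trail in $G_r$, and a \emph{mopping-up phase}, in which the walk behaves nearly as a simple random walk on $G_r$ except at the $O(n)$ vertices still incident to unvisited edges.

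In the exploration phase I would show that after $O(n)$ steps the walk has used $(r-1)n/2 - o(n)$ edges and visited all but $o(n)$ vertices, leaving an unvisited subgraph $U$ in which almost every vertex has degree exactly one. The key structural observation, using that $r$ is odd, is that each non-terminal visit to a vertex consumes two incident edges in an enter/exit pair, so $r-1$ edges per vertex can be absorbed this way before an odd leftover edge remains; with high probability the leftovers form approximately a perfect matching of $V(G_r)$. This is proved by coupling the walk's exposed neighbourhood to a branching process in the configuration model, generalizing the $r=3$ argument of \cite{CFJ} to account for the larger number of paired exits available at each vertex before saturation.

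In the mopping-up phase I would analyse the first-visit time to each uncovered vertex (for $C_V^b$) and to each unvisited edge (for $C_E^b$). Since only $O(n)$ of the $rn/2$ edges are still governed by the bias at any moment, the walk is well-approximated by a simple random walk on $G_r$. Following the Cooper--Frieze first-visit time framework for random regular graphs, I would show that the first-visit time to any given target is asymptotically exponential, with mean $\frac{n}{r-2}$ per vertex and $\frac{rn}{2(r-2)}$ per edge; the factor $\frac{r}{2}$ difference reflects that under the walk's near-stationary distribution each vertex is visited at rate $\frac{1}{n}$ while each specific edge is traversed at rate $\frac{2}{rn}$. The cover time asymptotics $\frac{n}{r-2}\log n$ and $\frac{rn}{2(r-2)}\log n$ then follow by the standard maximum-of-independent-exponentials argument, since the $\log$ of the number of targets is $\log n + O(1)$ in both cases.

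The main obstacle is the first-visit time analysis in the biased setting: rigorously establishing that the effective rate is $\frac{r-2}{n}$ per vertex rather than the SRW value $\frac{r-2}{(r-1)n}$, a speedup by a factor $r-1$ coming from the fact that the walk spends virtually no time revisiting edges near an unvisited vertex. The Cooper--Frieze argument assumes a time-homogeneous SRW, whereas the biased rule introduces history dependence through the evolving set of unvisited edges. Adapting their generating-function computation of the return-time distribution at a typical vertex to the biased walk, and establishing approximate independence of first-visit events at well-separated targets via a second-moment calculation, is where I expect the bulk of the technical work to lie. By contrast, the structural exploration-phase results should generalize from $r=3$ to larger odd $r$ fairly directly.
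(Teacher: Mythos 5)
Your proposal takes a genuinely different route from the paper, and the route has a gap at its most important step. The paper does not use the Cooper--Frieze first-visit-time / max-of-exponentials framework at all. Instead it defines $C(t)$ as the step at which the $t$th distinct edge is discovered and computes $\E{C(t+1)-C(t)}$ directly: conditional on the walk $W(t)$ so far, whenever a blue phase starts, one runs a simple random walk on $G(t)$ and the increment is the expected hitting time of the set $\ol{X}(t)$ of vertices still incident to unvisited edges. Contracting $\ol{X}(t)$ to a single supervertex and showing it is a \emph{root set of order $\om$} (roughly: $|S|/2$ internal edges, very few short paths between its members) gives the clean formula $\E{H(\ol X(t))}\sim\frac{r}{r-2}\frac{n}{rn-2t}$, and summing over $t$ yields the edge cover time; the vertex cover time is then read off from $|X_r(t)|\sim n\d^{r/2}$. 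This sidesteps any need for approximate exponentiality or approximate independence of visit events.

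Your structural intuition is basically right: the fact that the unvisited subgraph eventually looks like a near-matching on the vertices with unvisited edges is exactly the paper's claim that $X_1^g(t)$ dominates $\ol X(t)$ and that $\ol X(t)$ is a root set. But the two-phase cut you propose (exploration ending around $(r-1)n/2$ edges used) is much earlier than the paper's cut at $t_1=(1-\log^{-1/2}n)\frac{rn}{2}$, and the paper has to prove the root-set property, the concentration of $|X_1^g|,|Z|,|\Phi|$, and the $o(n\log n)$ bound on phase one uniformly over the whole range $t_1\le t\le t_4$; a single statement ``the leftover is roughly a perfect matching at one time'' is not strong enough, because the contribution to the cover time is spread over the entire late regime where the set of unvisited edges shrinks from $n\log^{-1/2}n$ down to $O(\log n)$.

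The real gap is the one you flagged yourself: ``adapting the Cooper--Frieze generating-function return-time computation to the biased walk.'' That is not a technical chore you can defer; it is the whole theorem. The Cooper--Frieze first-visit lemma depends essentially on time-homogeneity of the chain, and the biased walk's transition kernel changes at every step through the evolving unvisited set. You do not give any mechanism for restoring time-homogeneity (conditioning, time-rescaling, coupling to a time-homogeneous chain on a contracted graph), and without one the claim that first-visit times are ``asymptotically exponential with rate $\frac{r-2}{n}$'' is unsupported. The paper's architecture --- freeze $W(t)$, contract, and compute a stationary-start hitting time via $Z_{vv}/\pi_v$ and the spectral gap of the contracted graph --- is precisely a device for avoiding this difficulty, and I would encourage you to either adopt it or explain concretely how you intend to restore time-homogeneity before building the rest of your argument on top of the first-visit framework.
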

With this the asymptotic leading term of $C_V^b(G_r)$ is known for all $r \geq 3$, with Berenbrink, Cooper and Friedetzky \cite{bcf} having previously shown that $C_V^b(G_r) \sim \frac{rn}{2}$ for any even $r\geq 4$. They also showed that for even $r$, $C_E^b(G_r) = O(\om n)$ for any $\om$ tending to infinity with $n$, with the $\om$ factor owing to the w.h.p.\footnote{An event $\EE$ holds {\em with high probability} (w.h.p.) if $\Prob{\EE} \to 0$ as $n\to\infty$.} existence of cycles of length up to $\om$.

Cooper and Frieze \cite{CFreg} considered the simple random walk on $G_r$, showing that for any $r\geq 3$, $C_V^s(G_r) \sim \frac{r-1}{r-2}n\log n$ and $C_E^s(G_r) \sim \frac{r(r-1)}{2(r-2)}n\log n$, and we see that the biased random walk speeds up the cover time by a factor of $1/(r-1)$ for odd $r$. Cooper and Frieze \cite{CFNet} also consider the non-backtracking random walk, i.e. the walk which at no point reuses the edge used in the previous step, showing that $C_V^{nb}(G_r) \sim n\log n$ and $C_E^{nb}(G_r)\sim \frac{r}{2}n\log n$. Here, the biased random walk gains a factor of $1/(r-2)$ for odd $r$. 

Theorem \ref{thm:whptheorem} will follow from the following theorem. Let $C_V^b(G; s)$ ($C_E^b(G; t)$) denote the expected time taken for the biased random walk to visit $s$ vertices ($t$ edges).  Note that $C_\cdot^b(G; \cdot)$ is defined as an expectation over the space of random walks on the fixed graph $G$, and that $\E{C_\cdot^b(G_r; \cdot)}$ takes the expectation of $C_\cdot^b(G; \cdot)$ when $G$ is chosen uniformly at random from the set of $r$-regular graphs. 
\begin{theorem}\label{thm:exptheorem}
Suppose $r\geq 3$ is odd, and suppose $G_r$ is chosen uniformly at random from the set of $r$-regular graphs on an even number $n$ of vertices. Let $n - n\log^{-2} n \leq s \leq n$ and $(1-\log^{-2}n)\frac{rn}{2} \leq t \leq rn/2$, and let $\e > 0$. Then
\begin{align}
\E{C_V^b(G_r; s)} & = \frac{1\pm\e}{r-2} n\log\bfrac{n}{n-s+1} + o(n\log n), \\
\E{C_E^b(G_r; t)} & = \frac{r\pm\e}{2(r-2)} n\log\bfrac{rn}{rn-2t+1} + o(n\log n).
\end{align}
\end{theorem}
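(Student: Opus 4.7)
I would follow and generalize the approach of \cite{CFJ} for $r=3$. First, realize $G_r$ via the configuration model with deferred pairing, revealing half-edge matches only when the walk traverses them. Standard results let one condition on $G_r$ w.h.p.\ having at most $O(\log^k n)$ cycles of length $\le k$ for each fixed $k$, spectral gap bounded away from zero, and mixing time $O(\log n)$. Fix such a ``nice'' graph throughout and carry out all remaining estimates as an expectation over the walk (and over the remaining deferred pairings).

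Split the walk at the time $T_0$ at which $s_0 := n - n\log^{-2}n$ distinct vertices have been visited. During the bulk phase $[0,T_0]$ the local tree-likeness of $G_r$ forces almost every step to traverse an unvisited edge, so $T_0 \le rn/2 + o(n\log n)$ w.h.p.; the bulk phase therefore contributes $o(n\log n)$ to both expected cover times, comfortably inside the error tolerance. The only care here is to bound the number of ``wasted'' bulk-phase steps (those caused by entering a short cycle or a saturated vertex), which is $o(n\log n)$ by the conditioning above.

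The substance lies in the tail phase. I would decompose it into excursions $\EE_1,\EE_2,\ldots$, each ending at the first visit to a previously unvisited vertex. Let $U_j$ denote the number of unsaturated vertices (those with at least one unvisited incident edge) at the start of $\EE_j$; initially $U_1 \le n\log^{-2}n$. Each excursion naturally splits into a simple-random-walk portion moving among saturated vertices until the walk transitions into the unsaturated region, followed by a short biased sub-excursion along unvisited edges. Cooper--Frieze style hitting-time estimates on $G_r$ give that the random-walk portion has expected length $(1+o(1))\frac{r-1}{r-2}\cdot\frac{n}{U_j}$. For the biased sub-excursion the odd-$r$ hypothesis enters decisively: a parity argument on unvisited-degrees, combined with the tree-like structure of the local unvisited-edge subgraph, shows that the biased sub-excursion takes $O_r(1)$ expected steps and that a new vertex is found within $(1+o(1))/(r-1)$ expected entries into the unsaturated region. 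Multiplying these gives an expected excursion length of $(1+o(1))\frac{n}{(r-2)U_j}$, and summing over $j$ produces the claimed leading term $\frac{n}{r-2}\log\bfrac{n}{n-s+1}$. The edge cover time follows from the same decomposition after tracking the expected number of freshly visited edges per excursion, which gives the additional $r/2$ factor and the $rn - 2t+1$ substitution.

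The principal technical obstacle is the analysis of the biased sub-excursion. One must show that throughout the tail phase the local unvisited-edge multigraph at each unsaturated vertex remains tree-like w.h.p., so that the sub-excursion behaves essentially as a parity-governed walk with the asserted $1/(r-1)$ capture probability and $O(1)$ expected length. This requires careful use of deferred decisions to keep the unpaired half-edges conditionally independent of the walk's history, and it is precisely here that odd $r$ is essential: the parity of unvisited-degrees at the moment the walk enters the unsaturated region determines the correct constant, and the analysis would collapse (as it does in \cite{bcf}) if $r$ were even.
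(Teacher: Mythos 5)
Your high-level strategy of using the configuration model with deferred revelation, conditioning on a spectrally nice graph, and splitting into a cheap bulk phase plus a tail phase of excursions is the same as the paper's. But the details of the tail-phase calculation contain errors that prevent the proposal from reaching the claimed bound.

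First, the hitting-time constant is wrong. You quote the Cooper--Frieze single-vertex constant $\tfrac{r-1}{r-2}\cdot\tfrac{n}{|S|}$ for the simple-random-walk portion. But the target set here is $\ol X(t)$, the vertices with at least one unvisited edge, and in the tail phase $\ol X(t)$ is (essentially) a set of green vertices paired off by a red near-matching; it therefore contains $\sim|\ol X|/2$ internal edges. In the contracted graph these become $\sim|\ol X|/2$ self-loops at the supervertex, and the loop correction multiplies the return constant by $\tfrac{r}{r-1}$, giving $\tfrac{r}{r-2}\cdot\tfrac{n}{|\ol X|}$ (this is the point of Lemmas \ref{lem:treehitting} and \ref{roothitting}). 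Using $\tfrac{r-1}{r-2}$ overshoots the edge cover time.

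Second, and more seriously, the excursion accounting does not close. You decompose by \emph{new vertex discoveries}, define $U_j$ as the number of \emph{unsaturated} vertices at the start of excursion $j$, and assert the walk needs $(1+o(1))/(r-1)$ expected entries into the unsaturated region before discovering a new vertex. That quantity cannot be below $1$, so the statement is already ill-posed, but the deeper issue is that it is not a constant at all: at $t=(1-\d)\tfrac{rn}{2}$ there are $\sim rn\d$ green (one-red-edge) vertices but only $\sim n\d^{r/2}$ unvisited vertices (Lemma \ref{lem:Xr}), so the fraction of entries that touch an unvisited vertex is of order $\d^{r/2-1}\to 0$, and the number of entries per new vertex grows like $\d^{-(r/2-1)}$. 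Consequently your claimed per-excursion cost $\tfrac{n}{(r-2)U_j}$ has the wrong denominator: it should be the number of \emph{unvisited} vertices, not unsaturated ones. With unsaturated $U_j\sim rn\d$, the sum $\sum_j n/((r-2)U_j)$ over new-vertex excursions is a sum of $\sim n^{2/r}/k^{2/r}$ terms and produces a power of $n$, not the logarithm you assert. The paper avoids this trap entirely by counting \emph{edges}: the expected gap between discovering the $t$th and $(t+1)$th edge is $(\tfrac{r}{r-2}\pm\e)\tfrac{n}{rn-2t}$, and $rn-2t$ is \emph{exactly} the number of unvisited half-edges, which decreases by $2$ per edge; the sum then telescopes to the logarithm cleanly, and the vertex cover time is afterwards obtained from the set-size relation $|X_r(t)|\sim n\d^{r/2}$, which is exactly where the extra $r/2$ factor comes from.

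Finally, the technical heart of the paper---showing that $\ol X(t)$ is a ``root set of order $\om$'' via the green-link sprinkling argument (Lemmas \ref{lem:uniform} and \ref{lem:sprinkling}), together with the set-size estimates of Lemma \ref{lem:concentration}---is where most of the work lives. Your proposal gestures at ``careful use of deferred decisions'' to keep the unvisited-edge subgraph tree-like, but gives no mechanism for controlling the clustering of green vertices or for bounding the blue set $Z(t)$; without these estimates the hitting-time lemma does not apply with the right constant.
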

We take $a = b\pm c$ to mean that $b-c < a < b + c$. The $(1-\log^{-2}n)$ factor in the lower bounds for $s, t$ is a fairly arbitrary choice, and the proof here is valid for any $(1-1/\om)$ factor with $\om$ tending to infinity sufficiently slowly. The specific choice of $\log^{-2} n$ is made to aid readability.

Applying Theorem \ref{thm:exptheorem} with $s = n$ and $t = rn/2$ gives $\E{C_V^b(G_r)} \sim \frac{1}{r-2}n\log n$ and $\E{C_E^b(G_r)} \sim \frac{r}{2(r-2)}n\log n$. A little extra work is needed to conclude that w.h.p. $G_r$ is such that $C_V^b(G_r), C_E^b(G_r)$ have the same asymptotic values. We refer to the full paper version of \cite{CFJ}, where this is done in detail.

\section{Proof outline}\label{sec:outline}

The random $r$-regular graph $G_r$ is chosen according to the {\em configuration model}, introduced by Bollob\'as \cite{bollobas}. Each vertex $v\in [n]$ is associated with a set $\mathcal{P}(v)$ of $r$ {\em configuration points}, and we let $\mathcal{P} = \cup_{v} \mathcal{P}(v)$. We choose u.a.r. ({\em uniformly at random}) a perfect matching $\mu$ of the points in $\mathcal{P}$. Each $\mu$ induces a multigraph $G$ on $[n]$ in which $u$ is adjacent to $v$ if and only if $\mu(x)\in \mathcal{P}(v)$ for some $x\in \mathcal{P}(u)$, allowing parallel edges and self-loops. Any simple $r$-regular graph is equally likely to be chosen under this model.

We study a {\em biased random walk}. On a fixed graph $G$, this process is defined as follows. Initially, all edges are declarded {\em unvisited}, and we choose a vertex $v_0$ uniformly at random as the {\em active} vertex. At any point of the walk, the walk moves from the active vertex $v$ along an edge chosen uniformly at random from the unvisited edges incident to $v$, after which the edge is permanently declared {\em visited}. If there are no unvisited edges incident to $v$, the walk moves along a visited edge chosen uniformly at random. The other endpoint of the chosen edge is declared active, and the process is repeated.

A biased random walk on the random $r$-regular graph can be seen as a random walk on the configuration model, where we expose $\m$ along with the walk as follows. Initially choosing some point $x_0\in \PP$ u.a.r., we walk to $x_1 = \m(x_0)$, chosen u.a.r. from $\PP\setminus\{x_1\}$. Suppose $x_1\in \PP(v_1)$. From $x_1$, the walk moves to some unvisited $x_2 \in \PP(v_1)$. In general, if $W_k = (x_0, x_1,\dots,x_k)$ then (i) if $k$ is odd, the walk moves to $x_{k + 1} = \m(x_k)$ (chosen u.a.r. from $\PP \setminus \{x_0,\dots,x_k\}$ if $x_k$ is previously unvisited), and (ii) if $k$ is even, the walk moves from $x_k \in \PP(v_k)$ to $x_{k + 1}\in \PP(v_k)$, chosen u.a.r. from the unvisited points of $\PP(v_k)$ if such exist, otherwise chosen u.a.r. from all of $\PP(v_k)$.

We define $C(t)$ to be the number of steps taken immediately before the walk exposes its $t$th distinct edge. To be precise, if $W_k = (x_0,\dots,x_k)$ denotes the walk after $k$ steps, then
$$
C(t) = \min \{k : |\{x_0, x_1,\dots,x_k\}| = 2t-1\}.
$$
We also let $W(t) = W_{C(t)}$. Note that $C(t)$ is a random variable over the combined probability space of random graphs and random walks, as opposed to $C_V^b(G_r)$ and $C_E^b(G_r)$ which are variables over the space of random graphs only. We will show (Lemma \ref{lem:phaseone}) that if $t_1 = (1-\log^{-1/2}n)\frac{rn}{2}$ then
$$
\E{C(t_1)} = o(n\log n),
$$
which does not contribute significantly to the cover time. The main part of the proof is calculating $\E{C(t + 1) - C(t)}$ when $t \geq t_1$. We define the random graph $G(t) \subseteq G_r$ as the graph spanned by the first $t$ distinct edges visited by the walk. If, immediately after discovering its $t$th edge, the biased random walk inhabits a vertex incident to no unvisited edges, then a simple random walk commences on $G(t)$, and $C(t + 1) - C(t)$ is the number of steps taken for this random walk to hit a vertex incident to an unvisited edge.

We construct from $G(t)$ a graph $G^*(t)$ by contracting all vertices incident to at least one unvisited edge into one ``supervertex'' $x$. Thus, conditioning on $W(t)$, the graph $G^*(t)$ is a fixed graph, i.e. one with no random edges. We will show that when $t\geq t_1$, w.h.p. $x$ lies on ``few'' cycles of ``short'' length and has the appropriate number of self-loops (to be made precise in Section \ref{sec:randomwalks}), which will imply that the expected hitting time of $x$ for a simple random walk on $G^*(t)$ is
$$
\E{H(x)} \sim \frac{1}{r-2} \frac{rn}{rn-2t}.
$$

Readers familiar with the proof for the cubic graph \cite{CFJ} will recognize the general idea of this outline. In the case of cubic graphs, the set of vertices visited exactly once coincides with the set of vertices incident to one univisited edge, modulo the starting vertex of the walk. This is no longer true when $r \geq 5$, which forces a more detailed study of the edges not visited by the walk.

The paper is laid out as follows. Sections \ref{sec:Grproperties}, \ref{sec:randomwalks} and \ref{sec:uniform} respectively discuss properties of the random regular graph, hitting times of simple random walks, and a uniformity lemma for biased random walks, and may be read in any order. Section \ref{sec:covertime} contains the calculation of the cover time. Sections \ref{sec:setsizes} and \ref{sec:greenedges} are devoted to bounding the sizes of certain sets appearing in the calculations.

\section{Properties of $G_r$}\label{sec:Grproperties}
 Here we collect some properties of random $r$-regular graphs, chosen according to the configuration model. 
\begin{lemma}\label{lem:Grproperties}
Let $r\geq 3$. Let $G_r$ denote the random $r$-regular graph on vertex set $[n]$, chosen according to the configuration model. Let $\om$ tend to infinity arbitrarily slowly with $n$. Its value will always be small enough so that where necessary, it is dominated by other quantities that also go to infinity with $n$.
\begin{enumerate}[(i)]
\item With high probability, the second largest in absolute value of the eigenvalues of the transition matrix for a simple random walk on $G_r$ is at most $0.99$.
\item With high probability, $G_r$ contains at most $\om r^\om$ cycles of length at most $\om$,
\item The probability that $G_r$ is simple is $\OM(1)$.
\end{enumerate}
\end{lemma}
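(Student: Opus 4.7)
The three parts of the lemma correspond to three well-studied aspects of the configuration model, so the plan is mostly to assemble standard arguments rather than to prove anything new.

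For part (i), the plan is to invoke Friedman's theorem, which states that for fixed $r\geq 3$, the second largest eigenvalue (in absolute value) of the adjacency matrix of $G_r$ is w.h.p.\ at most $2\sqrt{r-1}+o(1)$. After dividing by $r$ to pass to the transition matrix of the simple random walk, this becomes $(2\sqrt{r-1}+o(1))/r$, which for every $r\geq 3$ is strictly less than $1$; in fact even for $r=3$ we get $2\sqrt{2}/3 \approx 0.943 < 0.99$, and the ratio is monotone decreasing in $r$. A weaker but more elementary bound due to Broder and Shamir would also suffice, since $0.99$ is a very loose constant. I would simply cite whichever of these is most convenient in the paper's reference list.

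For part (ii), the plan is a routine first moment calculation in the configuration model. Letting $X_k$ denote the number of cycles of length exactly $k$ in $G_r$, a standard configuration-model computation gives
\[
\E{X_k} = \frac{(r-1)^k}{2k}(1+o(1))
\]
uniformly for $k\leq \omega$, provided $\omega$ grows sufficiently slowly (say $\omega\leq \log\log n$). Summing,
\[
\E{\sum_{k=3}^{\omega} X_k} \leq \sum_{k=3}^{\omega} \frac{(r-1)^k}{2k}(1+o(1)) \leq r^{\omega},
\]
and Markov's inequality then gives $\Prob{\sum_{k\leq \omega} X_k > \omega r^\omega} \leq 1/\omega = o(1)$, which is exactly the claim.

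For part (iii), the plan is to cite Bollob\'as's classical result that the configuration model on $rn$ points produces a simple graph with asymptotic probability $\exp(-(r^2-1)/4) = \Omega(1)$. This is obtained by showing that the joint distribution of the number of loops and the number of multi-edges is asymptotically a pair of independent Poisson variables with bounded means, and computing the probability that both are zero.

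None of these steps is a real obstacle: parts (i) and (iii) are citations, and part (ii) is a textbook-level first-moment argument in the configuration model. The only mildly delicate point is making sure $\omega$ tends to infinity slowly enough that the $(1+o(1))$ approximation in the cycle count is valid up to length $\omega$; the remark in the lemma statement that $\omega$ may be taken as small as needed is precisely designed to accommodate this.
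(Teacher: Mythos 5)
Your proposal is correct and follows essentially the same route as the paper: part (i) is a direct citation of Friedman's spectral gap theorem, part (ii) is the first-moment Markov argument on the cycle count in the configuration model, and part (iii) is a citation of the classical Bollob\'as result (referenced via Frieze--Karo\'nski in the paper). The extra observations you make---that $2\sqrt{r-1}/r$ is decreasing in $r$ with maximum $\approx 0.943 < 0.99$ at $r=3$, and the explicit $\omega$-slowness caveat for the $(1+o(1))$ in the cycle count---are sound but not materially different from what the paper does.
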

Friedman \cite{JF} showed that for any $\e > 0$, the second eigenvalue of the transition matrix is at most $2\sqrt{r-1}/r + \e$ w.h.p., which gives (i).. Property (ii) follows from the Markov inequality, given that the expected number of cycles of length $k\leq \om$ can be bounded by $O(r^k)$. For the proof of (iii) see Frieze and Karo\'nski \cite{FK}, Theorem 10.3. Note that (iii) implies that any property which holds w.h.p. for the configuration multigraph holds w.h.p. for simple $r$-regular graphs chosen uniformly at random.

Let $G(t)$ denote the random graph formed by the edges visited by $W(t)$. Let $X_i(t)$ denote the set of vertices incident to $i$ red edges in $G(t)$ for $i=0,1,2,3$. Let $\ol X(t) = X_1(t)\cup \dots \cup \cup X_r(t)$ denote the set of vertices incident to at least one red edge. Let $G^*(t)$ denote the graph obtained from $G(t)$ by contracting the set $\ol X(t)$ into a single vertex, retaining all edges. Define $\l^*(t)$ to be the second largest eigenvalue of the transition matrix for a simple random walk on $G^*(t)$. 

We note that by \cite[Corollary 3.27]{af}, if $\G$ is a graph obtained from $G$ by contracting a set of vertices, retaining all edges, then $\l(\G) \leq \l(G)$. This implies that $\l^*(t) = \l(G^*(t)) \leq \l(G) \leq 0.99$ for all $t$. Initially, for small $t$, we find that w.h.p. $G^*(t)$ consists of a single vertex. In this case there is no second eigenvalue and we take $\l^*(t)=0$. This is in line with the fact that a random walk on a one vertex graph is always in the steady state.

We define $C(t)$ to be the number of steps the biased random walk takes to traverse $t$ distinct edges of $G_r$. Of course, if $G_r$ is disconnected and the random walk starts in a connected component of less than $t$ edges, then $C(t) = \infty$. We resolve this by defining a stopping time $T^* = \min\{t : \l^*(t) > 0.99\}$, and setting $C^*(t) = C(\min\{t, T\})$. Strictly speaking, the estimates of $C(t)$ in the upcoming sections are estimates of $C^*(t)$, but we do not make any explicit distinction between the two, noting that by Lemma \ref{lem:Grproperties} (i), w.h.p. $T^* = \infty$ which implies that $C^*(t) = C(t)$ for all $t$.

\section{Simple random walks}\label{sec:randomwalks}

We are interested in calculating $C(t+1) - C(t)$, i.e. the time taken between discovering the $t$th and the $(t+1)$th edge. Between the two discoveries, the biased random walk can be coupled to a simple random walk on the graph induced by $W(t)$, and in this section we derive the hitting time of a certain type of expanding vertex set.

Implicitly, when we state results about a vertex set $S$ in a graph $G$ we are considering a sequence of vertex sets $(S_n)$ in a graph sequence $(G_n)$. We say that $G$ has {\em positive eigenvalue gap} if the second largest eigenvalue $\l_n$ of the transition matrix for $G_n$ satisfies $\lim\sup \l_n < 1$.

Consider a simple random walk on an $r$-regular graph $G = (V, E)$ with eigenvalue gap $1 - \l > 0$. For a set $S$ of vertices and a probability measure $\r$ on $V$, let $\Ex{\r}{H(S)}$ denote the expected hitting time of the set $S$, i.e. the number of steps the walk takes until it reaches $S$, when the initial vertex is chosen according to $\r$. Let $\pi$ denote the stationary distribution of the random walk, uniform in the case of a regular graph and proportional to degrees in general. Let $P_u^{(t)}(v)$ denote the probability that a simple random walk starting at $u$ occupies vertex $v$ at time $t$.
\begin{lemma}\label{lem:hitting}
Suppose $v$ is a vertex of a graph. Then the hitting time of $v$, starting from the stationary distribution $\pi$, is given by
$$
\Ex{\pi}{H(v)} = \frac{Z_{vv}}{\pi_v}
$$
where
$$
Z_{vv} = \sum_{t\geq 0} (P_v^{(t)}(v) - \pi_v).
$$
\end{lemma}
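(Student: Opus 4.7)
The plan is to prove this classical identity via the fundamental matrix of the Markov chain. Because the graph has positive eigenvalue gap $1-\l>0$, the quantity $|P_u^{(t)}(v)-\pi_v|$ decays geometrically in $t$, so the matrix $Z$ with entries
$$
Z_{uv}=\sum_{t\ge 0}\bigl(P_u^{(t)}(v)-\pi_v\bigr)
$$
is well defined, and in particular the series defining $Z_{vv}$ in the statement converges absolutely. A short telescoping computation (using $PJ=J$ where $J$ is the rank-one stochastic matrix with every row equal to $\pi$) shows that $Z$ satisfies the algebraic identity $(I-P)Z=I-J$.

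The main step is to establish from this the pointwise hitting-time formula
$$
\Ex{u}{H(v)}=\frac{Z_{vv}-Z_{uv}}{\pi_v},\qquad u\in V(G).
$$
I would prove this by a first-step analysis: writing $h_u=\Ex{u}{H(v)}$ with the convention $h_v=0$, one has $h_u=1+\sum_w P(u,w)h_w$ for $u\neq v$, and one then verifies directly that $h=(Z_{vv}\mathbf{1}-Ze_v)/\pi_v$ satisfies both the diagonal constraint $h_v=0$ and the off-diagonal equations, using $(I-P)Z=I-J$. Uniqueness of the solution pins down the formula. This is really the only calculation of substance in the proof.

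Finally, averaging the pointwise formula against $\pi$ gives
$$
\Ex{\pi}{H(v)}=\sum_u \pi_u\,\Ex{u}{H(v)}=\frac{Z_{vv}}{\pi_v}-\frac{1}{\pi_v}\sum_u\pi_u Z_{uv},
$$
and stationarity of $\pi$ forces
$$
\sum_u\pi_u Z_{uv}=\sum_{t\ge 0}\Bigl(\sum_u\pi_u P_u^{(t)}(v)-\pi_v\Bigr)=\sum_{t\ge 0}(\pi_v-\pi_v)=0,
$$
so the lemma follows. The only obstacle is verifying the pointwise identity in the second paragraph, but this is a textbook manipulation of the fundamental matrix (cf.\ Aldous--Fill \cite{af}) with no combinatorial content specific to the random-regular-graph setting; the spectral gap assumption is needed only to make $Z$ convergent in the first place, and after that the argument is purely algebraic.
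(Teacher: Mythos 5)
The paper does not prove this lemma; it simply cites it as Lemma 2.11 of Aldous--Fill \cite{af}. Your proof is correct and is essentially the standard fundamental-matrix argument that appears there: absolute convergence of $Z$ from the spectral gap, the identity $(I-P)Z = I - J$, the pointwise formula $\Ex{u}{H(v)} = (Z_{vv}-Z_{uv})/\pi_v$ via first-step analysis and uniqueness, and then averaging against $\pi$ (noting $\sum_u \pi_u Z_{uv} = 0$ by stationarity). So you have filled in the cited argument faithfully rather than found a genuinely different route.
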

Lemma \ref{lem:hitting} can be found in \cite{af} (Lemma 2.11), and can be applied to hitting times of sets by contracting a set of vertices to a single vertex, retaining all edges. The following bound will be frequently used. Suppose $G$ is a graph with eigenvalue gap $1-\l(G)$, and $S$ is a set of vertices in $G$. Then if $G_S$ is the graph obtained by contracting $S$ into a single vertex, retaining all edges, we have equal hitting times for $S$ in $G$ and $G_S$ and
\begin{align}
\Ex{\pi}{H(S)} = \frac{n}{|S|} \sum_{t\geq 0} \left(P_{S}^{(t)}(S) - \pi_S\right) & \leq \frac{n}{|S|}\sum_{t\geq 0} \l(G_S)^t \\
& = \frac{1}{1-\l(G_S)} \frac{n}{|S|} \leq\frac{1}{1-\l(G)}\frac{n}{|S|}.\label{eq:js}
\end{align}
Indeed, $|P_u^{(t)}(v) -\pi_v| \leq (d(v)/d(u))^{1/2} \l^t$ for any $u,v,t$ in a graph with eigenvalue gap $1-\l$ (see for example Jerrum and Sinclair \cite{JS}), and $\l(\G) \leq \l(G)$ for any $\G$ obtained from $G$ by contracting a set of vertices (see \cite[Corollary 3.27]{af}).

In the following lemma we define $N_d(S)$ to be the set of vertices at distance exactly $d$ from the vertex set $S$. 
\begin{lemma}\label{lem:treehitting}
Suppose $G$ is an $r$-regular graph on $n$ vertices with positive eigenvalue gap. Let $\om$ tend to infinity arbitrarily slowly with $n$. Suppose $S$ is a set of vertices with $|S|$ even such that
$$
|N_d(S)| = (r-1)^d|S|
$$
for all $1\leq d\leq \om$. Then
$$
\Ex{\pi}{H(S)} \sim \frac{r}{r-2}\frac{n}{|S|}.
$$
\end{lemma}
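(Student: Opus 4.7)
The plan is to contract $S$ to a single supervertex $x$ in $G$ and apply Lemma \ref{lem:hitting} to the contracted graph $G_S$: this gives $\Ex{\pi}{H(S)} = Z_{xx}/\pi_x$ with $\pi_x = r|S|/(rn) = |S|/n$, so it suffices to show $Z_{xx} \to r/(r-2)$. The structural input is that $|N_d(S)|=(r-1)^d|S|$ for $1\leq d\leq \om$ rigidly determines the $\om$-neighborhood of $S$: a double counting of edges between consecutive levels $N_{d-1}(S)$ and $N_d(S)$, together with $r$-regularity, shows that equality in the tree-growth bound at every level forces each $v \in S$ to have exactly one neighbor in $S$ (so $S$ spans a perfect matching, using $|S|$ even) and each $u \in N_d(S)$ with $d \leq \om-1$ to have exactly one parent in $N_{d-1}(S)$ and $r-1$ children in $N_{d+1}(S)$. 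Consequently in $G_S$, the vertex $x$ carries $|S|/2$ self-loops together with $(r-1)|S|$ ordinary edges into $N_1(S)$, with total degree $r|S|$.

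Next I split $Z_{xx} = \sum_{t\geq 0}(P_x^{(t)}(x)-\pi_x)$ at $t=\om$. Since the walker lies at distance at most $t$ from $x$ after $t$ steps, for every $t \leq \om$ each transition taken is out of a vertex at depth at most $\om-1$, whose local neighborhood matches that of an infinite rooted tree; one may therefore couple the walk in $G_S$ exactly with the walk on that tree, in which from $x$ one stays (via a self-loop) with probability $p:=1/r$ or moves to a uniform child with probability $q:=(r-1)/r$, and from any non-root vertex one moves to the parent with probability $p$ or to a uniform child with probability $q$. The distance-from-root process is then a biased walk on $\N$; a standard generating-function computation --- the first-passage GF from $1$ to $0$ is $h(z)=(1-\sqrt{1-4pqz^2})/(2qz)$ and the first-return GF at $0$ is $f(z)=pz+qz\,h(z)$ --- yields $h(1)=p/q$, $f(1)=2p=2/r$, and hence $\sum_{t\geq 0} P_x^{(t)}(x)[\text{tree}] = 1/(1-f(1)) = r/(r-2)$. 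This sum converges exponentially since $4pq = 1-((r-2)/r)^2 < 1$, so truncating at $\om$ costs $o(1)$.

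For $t > \om$, I would invoke the Jerrum--Sinclair bound $|P_x^{(t)}(x) - \pi_x| \leq \l(G_S)^t \leq \l(G)^t$ (using the contraction monotonicity $\l(G_S)\leq \l(G)$ from \cite[Corollary 3.27]{af}); by the positive eigenvalue gap of $G$, this sums to $\l(G)^{\om+1}/(1-\l(G)) = o(1)$. The constant correction $\sum_{t=0}^{\om}\pi_x = O(\om |S|/n) = O(\om(r-1)^{-\om})$ is also $o(1)$ because the tree hypothesis forces $|S|(r-1)^\om \leq n$. Combining the three estimates gives $Z_{xx} = r/(r-2) + o(1)$ and hence $\Ex{\pi}{H(S)} \sim \frac{r}{r-2}\cdot \frac{n}{|S|}$. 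The step I expect to require the most care is the first one --- extracting the exact local tree structure, and in particular the presence of $|S|/2$ self-loops at $x$, which produces the $1/r$ self-loop probability at the root and hence the final factor $r/(r-2)$ (as opposed to the $(r-1)/(r-2)$ one would obtain from a standard $r$-regular tree without root self-loops). Once that structure is in hand, the generating-function calculation and the eigenvalue-gap tail bound are routine.
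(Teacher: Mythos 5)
Your proof is correct and follows the same overall architecture as the paper: contract $S$ to a single node $x$ with $|S|/2$ self-loops, apply Lemma \ref{lem:hitting}, split $Z_{xx}$ at $t=\om$, handle the tail via the eigenvalue gap and contraction monotonicity, and show the truncated sum is $r/(r-2)+o(1)$ using the local tree structure. The one place you genuinely diverge is the short-time estimate: the paper outsources this to Cooper--Frieze \cite{CFreg} Lemma 7 (which gives $(r-1)/(r-2)$ returns for a loop-free root) and then multiplies by the $r/(r-1)$ self-loop correction, whereas you compute the return generating function on the rooted tree from scratch, getting $f(1)=2/r$ and hence $\sum_t P_x^{(t)}(x)=1/(1-f(1))=r/(r-2)$ directly, with the exponential tail controlled by $4pq<1$. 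Your route is more self-contained and makes the role of the $|S|/2$ self-loops (the $p=1/r$ self-transition at the root) completely transparent, at the cost of a few lines of generating-function algebra; the paper's is shorter by leaning on a known lemma. One small overstatement: the hypothesis forces $S$ to span exactly $|S|/2$ internal edges, but not that these form a perfect matching on $S$ --- a vertex of $S$ could have two or zero neighbours in $S$ while the edge count is still $|S|/2$. This is harmless, since after contraction all that survives is the count of self-loops, not their incidence pattern, and your downstream argument only uses that count.
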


\begin{proof}
We first note that the set $S$ contains exactly $|S|/2$ edges. Indeed, as $|N(S)| = (r-1)|S|$ and the total degree of $S$ is $r|S|$, $S$ contains at most $|S|/2$ edges. As $|N_2(S)| = (r-1)^2|S|$, each vertex of $N(S)$ must have exactly one edge to $S$, implying that $S$ contains at least $|S|/2$ edges.

Consider the graph $G_S$ obtained by contracting $S$ into a single node $s$, retaining all edges. In the graph $G_S$, $s$ has degree $r|S|$.  Then $s$ is a node with exactly $|S|/2$ self-loops, and is otherwise contained in no cycle of length at most $\om$, as $|N_d(S)| = (r-1)^d|S|$ ensures that $G_S$ is locally a tree up to distance $\om$ from $s$. Since $\pi_s = |S| /n = o(1)$ we may choose $\om$ tending to infinity with $\om\pi_s = o(1)$. We have
$$
Z_{ss} = \sum_{t\geq 0} (P_s^{(t)}(s) - \pi_s) = \left[\sum_{t =0}^\om P_s^{(t)}(s)\right] - o(1) + \sum_{t > \om} |P_s^{(t)}(s) - \pi_s|.
$$
Repeating the argument following \eqref{eq:js},
$$
\sum_{t > \om} |P_s^{(t)}(s) - \pi_s| \leq \sum_{t>\om} \l^t = O(\l^\om) = o(1).
$$
We now argue that
$$
\sum_{t = 0}^\om P_s^{(t)}(s) = \frac{r}{r-2} + o(1).
$$
It is argued in Cooper and Frieze \cite{CFreg}, Lemma 7, that with no loop at vertex $s$, the expected number of returns to $s$ within $\om$ steps is $(r-1)/(r-2)+o(1)$. With $|S| / 2$ loops, when at $s$, there is a $1/r$ chance of using the loop and so each visit to $s$ yields $r/(r-1)$ expected returns. I.e. the 2 of \cite{CFreg} become $(r-1)/(r-2)\times r/(r-1)$.
\end{proof}

\begin{definition}\label{def:rootset}
Let $G = (V, E)$ be an $r$-regular graph. A set $S\subseteq V$ is a {\em root set of order $\ell$} if (i) $|S| \geq \ell^5$, (ii) the number of edges with both endpoints in $S$ is between $|S|/2$ and $(1/2 + \ell^{-3})|S|$, and (iii) there are at most $|S| /\ell^3$ paths of length at most $k$ between vertices of $S$ which use no edges fully contained in $S$.
\end{definition}

The set $S$ in Lemma \ref{lem:treehitting} can be thought of as a ``pure'' root set, and we now show that the hitting time remains similar when we allow some impurities.

\begin{lemma}\label{roothitting}
Let $\om$ tend to infinity arbitrarily slowly with $n$. Suppose $G$ is an $r$-regular graph on $n$ vertices whose transition matrix has second largest eigenvalue $\l \leq 0.99$, containing at most $\om r^\om$ cycles of length at most $\om$. If $S$ is a root set of order $\om$ and a simple random walk is initiated at a uniformly random vertex of $G$, then the expected number of steps needed to reach $S$ is
$$
\E{H(S)} \sim \frac{r}{r-2}\frac{n}{|S|}.
$$
\end{lemma}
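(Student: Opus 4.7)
The plan is to follow the template of Lemma \ref{lem:treehitting}, absorbing the impurities permitted by Definition \ref{def:rootset}. Contract $S$ to a supervertex $s$ in $G_S$; since $G$ is $r$-regular, a simple random walk from a uniform vertex of $G$ reaches $S$ in the same expected time that a walk on $G_S$ from the stationary distribution reaches $s$, so Lemma \ref{lem:hitting} gives
$$
\E{H(S)} = \frac{Z_{ss}}{\pi_s}, \qquad \pi_s = \frac{|S|}{n}.
$$
The target reduces to $Z_{ss} = \frac{r}{r-2} + o(1)$. We may assume $|S| = o(n)$, which lets us choose $\omega$ with $\omega\pi_s = o(1)$. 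Splitting $Z_{ss}$ at time $\omega$, the tail is $O(\lambda(G_S)^\omega) = o(1)$ by the same bound used for \eqref{eq:js} together with $\lambda(G_S) \leq \lambda \leq 0.99$, and $(\omega+1)\pi_s = o(1)$, so it remains to show
$$
\sum_{t=0}^\omega P_s^{(t)}(s) = \frac{r}{r-2} + o(1).
$$

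By Definition \ref{def:rootset}(ii), $s$ carries $L = e(S) \in [|S|/2,(1/2 + \omega^{-3})|S|]$ self-loops, so the probability of a loop step at $s$ is $\frac{2L}{r|S|} = \frac{1}{r}\bigl(1 + O(\omega^{-3})\bigr)$. By (iii), at most $|S|/\omega^3$ cycles of length $\leq \omega$ through $s$ are not self-loops (call these \emph{impurity cycles}). Combined with the bound $\omega r^\omega$ on short cycles in $G$, these facts make the depth-$\omega$ neighborhood of $s$ in $G_S$ almost identical to the idealized graph $\tilde G$ of Lemma \ref{lem:treehitting}: a root with exactly $|S|/2$ self-loops and a pure $(r-1)$-ary tree of depth $\omega$ attached. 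I would couple the walks on $G_S$ and $\tilde G$ from $s$ by matching random half-edge choices step by step, so that they agree until the first \emph{bad step}, meaning the choice of an excess loop at $s$ or of a half-edge incident to $s$ lying on an impurity cycle.

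The main obstacle is controlling the coupling discrepancy over $\omega$ steps. Each visit to $s$ picks an excess loop with probability $O(\omega^{-3})$, and the expected number of visits to $s$ in $\omega$ steps is $O(1)$ (bounded by the idealized return sum $\frac{r}{r-2} + o(1)$), giving a total excess-loop contribution of $O(\omega^{-3}) = o(1)$. An impurity cycle of length $j \leq \omega$ contributes to $P_s^{(j)}(s)$ a mass of order $\frac{1}{r|S|}(1/r)^{j-1}$ per cycle (the probability that the walk traces that specific closed path from $s$); summing over the $\leq |S|/\omega^3$ such cycles and all $j \leq \omega$ yields $O(1/\omega^3) = o(1)$. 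Closed walks that interleave loop steps with impurity-cycle traversals are strictly lower order and are handled by the same union-bound argument over length-$\omega$ closed walks at $s$ in $G_S$ that leave the tree-plus-loops structure. Since $\sum_{t=0}^\omega \tilde P_s^{(t)}(s) = \frac{r}{r-2} + o(1)$ by Lemma \ref{lem:treehitting}'s proof, we conclude the desired identity, hence $Z_{ss} = \frac{r}{r-2} + o(1)$, and the lemma follows.
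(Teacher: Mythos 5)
Your high-level plan matches the paper's: contract $S$ to $s$, invoke Lemma~\ref{lem:hitting} to reduce to $Z_{ss} = \frac{r}{r-2} + o(1)$, split the return sum at time $\omega$, and compare the first $\omega$ steps to the idealized loop-plus-tree walk of Lemma~\ref{lem:treehitting}. However, the way you estimate the error from bad behavior has a real gap, and it is precisely the part where the paper's proof is more careful.

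You attempt to bound the bad contribution to $\sum_{\tau\le\omega}P_s^{(\tau)}(s)$ by enumerating the sources directly: excess loops contribute $O(\omega^{-3})$, impurity cycles traced exactly once contribute $O(\omega^{-3})$, and then you assert that closed walks which ``interleave loop steps with impurity-cycle traversals are strictly lower order and are handled by the same union-bound argument.'' That last assertion is the crux and it is not a proof: once the walk leaves $s$ along an impurity edge or an excess loop, there is a combinatorially rich collection of ways it can weave through $T$, the impurity structure, and $s$ before time $\omega$, and your per-cycle estimate $\frac{1}{r|S|}(1/r)^{j-1}$ covers only the single closed walk that traces one cycle exactly. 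You also never explicitly fold in the $\omega r^\omega$ short cycles of $G$ that may sit inside the depth-$\omega$ neighborhood of $s$ without passing through $s$ at all; you mention the bound but it does not appear in your error accounting, and a tree component of $\Gamma$ that contains a short cycle is also a deviation from the idealized $(r-1)$-ary tree. The paper avoids all of this enumeration by \emph{structuring} the neighborhood first: it takes $\Gamma = G_S$ restricted to vertices at distance $1,\dots,\omega$ from $s$, shows that all but $O(|R|/\omega^3)$ of its components are clean $(r-1)$-ary trees rooted at a single vertex of $R = N(S)$ (the exceptions being components with $\ge 2$ members of $R$, i.e.\ impurity cycles, or components containing one of the $\omega r^\omega$ short cycles of $G$), calls the vertex set of those clean components $T$, declares an edge bad iff it is an excess loop at $s$ or not contained in $T\cup\{s\}$, and observes that a walk which has taken only good steps sits in $T\cup\{s\}$ and evolves \emph{exactly} as the idealized chain. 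The error is then a single, clean union bound: at each of the $\omega$ steps, a bad edge can only be chosen from $s$ with probability $O(\omega^{-3})$ and never from $T$, so $\Pr\{\text{any bad step in }\omega\text{ steps}\} = O(\omega^{-2})$, giving $\sum_{\tau\le\omega}P_s^{(\tau)}(s) = \frac{r}{r-2} + O(\omega^{-1})$. Replacing your case-by-case estimate with this structural decomposition and union bound is exactly the idea your write-up is missing.
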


\begin{proof}
Consider the contracted graph $G_S$, and let $s$ denote the contracted node. Then $s$ has degree $r|S|$, and $s$ has at most $(1/2 +\om^{-3})|S|$ self-loops. Apart from the self-loops, $s$ lies on at most $|S|/\om^3$ cycles of length at most $\om$, as any cycle of $G_S$ containing $s$ corresponds to a path between members of $S$ in $G$.

Let $R = N(S)$, and note that $|R| = \OM(|S|)$. Consider the graph $\G$, defined as $G_S$ induced on the set of vertices at distance $1,2,\dots,\om$ from $s$. Note that $s$ is not included in $\G$. The graph $\G$ contains all of $R$, and as $s$ lies on at most $|S|/\om^3$ short cycles in $G_S$, the number of components in $\G$ containing more than one member of $R$ is $O(|S|/\om^3) = O(|R|/\om^3)$. As $G$ contains at most $\om r^\om$ short cycles, the number of components of $\G$ containing a cycle is at most $\om r^\om = O(|R| / \om^3)$ (choosing $\om$ tending to infinity sufficiently slowly). This leaves $(1-o(1))|R|$ connected components in $\G$ which are all complete $(r-1)$-ary trees of height $\om$, each rooted at a member of $R$ and containing no other member of $R$. Let $T$ denote the set of vertices on such components. 

Arbitrarily choose $|S|/2$ of the self-loops of $s$ in $G_S$, and designate them as {\em good}. Also say that an edge is good if it has both endpoints in $T\cup \{s\}$. All other edges are {\em bad}.

Consider a simple random walk $Z(\t)$ of length $\om$ on $G_S$, starting at $s$. Let $\BB_\t$ denote the event that $Z(\t)$ traverses a bad edge to reach $Z(\t+1)$. Whenever the walk visits $s$, the probability that it chooses a bad edge is $O(\om^{-3})$. If the walk is inside $T$, there are no bad edges to choose. So for any $\t \geq 0$ we have
\begin{align}
P_s^{(\t)}(s) & = \Prob{Z(\t)  = s\cap\ \bigcap_{r=0}^{\t-1} \overline{\BB_r}} + \Prob{Z(\t) = s \cap \bigcup_{r=0}^{\t-1} \BB_r} \\
& = \Prob{Z(\t) = s\cap \bigcap_{r=0}^{\t-1} \ol{\BB_r}} + O(\om^{-2}).
\end{align}
If $\BB_r$ does not occur for any $r \leq \t-1$, then the walk $(Z(0),\dots,Z(\t-1))$ can be viewed as the same Markov chain as considered in Lemma \ref{lem:treehitting}. So, by Lemma \ref{lem:treehitting},
$$
\sum_{\t=0}^\om P_s^{(\t)}(s) = \frac{r}{r-2} + O(\om^{-1}).
$$
\end{proof}

The next lemma is needed in the study of the sizes of $X_i(t)$, and will be applied with $R = X_1(t), S = \ol X(t)$.
\begin{lemma}\label{lem:blueend}
Let $G$ be an $r$-regular graph with positive eigenvalue gap. Let $R \subseteq S \subseteq V$ be vertex sets. Suppose a simple random walk is initiated at a uniformly random vertex $y \in R$, and ends as soon as it hits $S\setminus \{y\}$. Then there is a constant $B > 0$ such that for any $x\in S$, the probability that the walk ends at $x$ is at most $B / |R|$.
\end{lemma}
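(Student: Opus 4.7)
Writing $H(y, x) = \Pr_y[\text{walk ends at } x]$, the probability in question equals $|R|^{-1}\sum_{y\in R} H(y, x)$, so it suffices to prove $\sum_{y\in R} H(y, x) \leq B$ for some $B$ depending only on the eigenvalue gap of $G$.

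The first step is to decompose $H$ via the Markov chain induced on $S$-visits of the simple random walk. Let $T^+ = \min\{j \geq 1 : W_j \in S\}$ and $K(y, z) = \Pr_y[W_{T^+} = z]$ for $y, z \in S$. The walk in the lemma ignores first $S$-visits that land back at $y$ and only stops once the induced chain reaches some $z \neq y$, so a geometric/renewal decomposition yields
\[
H(y, x) = \frac{K(y, x)}{1 - K(y, y)} \text{ for } x \neq y, \qquad H(y, y) = 0.
\]
Reversibility of the simple random walk on the $r$-regular graph $G$ (uniform stationary distribution), applied to paths $y \to x$ whose interior avoids $S$, gives the symmetry $K(y, x) = K(x, y)$. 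Using $R \subseteq S$ and that $K(x, \cdot)$ is a probability distribution on $S$,
\[
\sum_{y \in R} H(y, x) \leq \sum_{y \in S \setminus \{x\}} \frac{K(x, y)}{1 - K(y, y)} \leq \frac{1}{c}, \qquad c := \min_{y \in S}(1 - K(y, y)).
\]

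The main obstacle is the uniform lower bound $c = \Omega(1)$. I would use the classical escape-probability identity
\[
1 - K(y, y) = \Pr_y[\tau_{S \setminus \{y\}} < \tau_y^+] = \frac{1}{r \, R_{\mathrm{eff}}(y, S \setminus \{y\})}
\]
together with Rayleigh monotonicity, which bounds $R_{\mathrm{eff}}(y, S \setminus \{y\}) \leq R_{\mathrm{eff}}(y, v)$ for any fixed $v \in S \setminus \{y\}$ (the case $|S| = 1$ is vacuous since then the walk never terminates). The commute-time identity $R_{\mathrm{eff}}(y, v) = (\mathbb{E}_y[\tau_v] + \mathbb{E}_v[\tau_y])/(rn)$, combined with the standard spectral bound $\mathbb{E}_u[\tau_w] = O(n/(1-\lambda))$ (the same type of argument that underlies \eqref{eq:js}), gives $R_{\mathrm{eff}}(y, v) = O(1/(r(1-\lambda)))$ uniformly in $y$. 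Hence $c = \Omega(1-\lambda) = \Omega(1)$, and since $\lambda$ is bounded away from $1$, this yields the lemma with $B = O(1)$.
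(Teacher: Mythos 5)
Your argument is correct and follows essentially the same route as the paper: renew at returns to $y$ to write the termination probability as a ratio with an escape probability in the denominator, flip $y$ and $x$ in the numerator by reversibility, observe that the numerators are disjoint events across $y$ and hence sum to at most $1$, and lower-bound the escape probability by $\Omega(1-\lambda)$ via hitting-time/commute-time estimates. The only difference is packaging: you organize the renewal through the induced kernel $K$ on $S$ and invoke Rayleigh monotonicity together with the commute-time identity, whereas the paper performs the renewal summation explicitly and cites the Aldous--Fill identity $P_i(T_j < T_i^+) = 1/\bigl(\pi_i(\mathbb{E}_i T_j + \mathbb{E}_j T_i)\bigr)$, which is precisely the effective-resistance/commute-time fact you use in electrical-network language.
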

\begin{proof}
Let $y\in R$ denote the random starting point of the walk and let $x\in S\setminus\{y\}$. We will in general write $(w_0,w_1,\dots)$ for a simple random walk. Define $S(x, y) = S\setminus\{x, y\}$. The following reversibility property is central to the proof. If $T_{S(x, y)}$ denotes the first time at which $S(x, y)$ is visited, then as the graph is regular,
\begin{equation}
P_x(w_t = y\AND T_{S(x, y)} > t) = P_y(w_t = x\AND T_{S(x, y)} > t), \quad \text{for all $t\geq 0$.}
\end{equation}
Let $\m$ denote the uniform probability measure on $R$. Then, writing $\cdot$ or $y$ for the starting vertex chosen by $\m$, the probability that $x$ is the first vertex of $S\setminus\{y\}$ to be hit is
\begin{align}
P_\m(T_x < T_{S(x, \cdot)}) & = \sum_{t\geq 0} P_\m(t = T_x\AND T_{S(x, \cdot)} > t) \\
& \leq \sum_{t\geq 0} P_\m(w_t = x \AND T_{S(x, \cdot)} > t) \\
& = \frac{1}{|R|} \sum_{t\geq 0}\sum_{y\in R} P_y(w_t = x \AND T_{S(x, y)} > t) \\
& = \frac{1}{|R|} \sum_{y\in R}\sum_{t\geq 0} P_x(w_t = y \AND T_{S(x, y)} > t).
\end{align}
Write
\begin{equation}\label{eq:Qdef}
Q_{xy}(t) = P_x(w_t = y \AND w_1,\dots,w_{t-1}\neq y \AND T_{S(x, y)} > t),
\end{equation}
so that $Q_{xy}(t)$ denotes the probability that the walk avoids $S(x, y)$ and first visits $y$ is at time $t$ (or returns to $y$ if $x = y$). Then for all $t\geq 1$,
$$
P_x(w_t = y \AND T_{S(x, y)} > t) = \sum_{s=1}^t Q_{xy}(s) P_y(w_{t-s} = y\AND T_{S(x, y)} > t-s),
$$
so
$$
\sum_{t\geq 0}P_x(w_t = y\AND T_{S(x, y)} > t) = \left[\sum_{s\geq 1} Q_{xy}(s)\right]\left[\sum_{t\geq 0}P_y(w_t = y \AND T_{S(x, y)} > t)\right].
$$
As hitting times are almost surely finite, writing $T_y^+ = \min\{t \geq 1 : w_t = y\}$, we have
$$
\sum_{s\geq 1} Q_{xy}(s) = P_x(T_y^+ < T_{S(x, y)}).
$$
We have
\begin{align}
\sum_{t\geq 0}P_y(w_t = y\AND T_{S(x, y)} > t) & = 1 + \sum_{t\geq 1}\sum_{s=1}^t Q_{yy}(s)P_y(w_{t-s} = y\AND T_{S(x, y)} > t-s) \\
& = 1 + \left[\sum_{s\geq 1} Q_{yy}(s)\right]\left[\sum_{t\geq 0} P_y(w_t = y\AND T_{S(x, y)} > t)\right],
\end{align}
so as $\sum_{s\geq 1} Q_{yy}(s) = P_y(T_y^+ < T_{S(x, y)})$,
$$
\sum_{t\geq 0} P_y(w_t = y\AND T_{S(x, y)} > t) = \frac{1}{1-\sum_{s\geq 1}Q_{yy}(s)} = \frac{1}{P_y(T_{S(x, y)} < T_y^+)}.
$$
So we have
\begin{equation}\label{eq:almostdone}
P_\m(T_x < T_{S(x, y)}) = \frac{1}{|R|}\sum_{y\in R} \frac{P_x(T_y < T_{S(x, y)})}{P_y(T_{S(x, y)} < T_y^+)}.
\end{equation}
\begin{claim}\label{cl:B}
$$
P_y(T_{S(x, y)} < T_y^+) \geq \frac{1-\l}{2}.
$$
\end{claim}
Applying Claim \ref{cl:B} to \eqref{eq:almostdone} finishes the proof, as
$$
P_\m(T_x < T_S) \leq \frac{2}{(1-\l)|R|} \sum_{y\in R} P_x(T_y < T_{S(x, y)}),
$$
and this is a sum of mutually exclusive events, which therefore sums to at most $1$.
\begin{proof}[Proof of Claim \ref{cl:B}]
Corollary 2.8 of \cite{af} states that for any states $i, j$ in a Markov chain,
$$
P_i(T_j < T_i^+) = \frac{1}{\pi_i(\Ex{i}{T_j} + \Ex{j}{T_i})}.
$$
We consider running a simple random walk on the graph obtained by contracting $S$ to a single vertex, and set $i = y, j = S(x, y)$. As $\pi_y = 1/n$, we have
$$
P_y(T_{S(x, y)} < T_y^+) = \frac{n}{\Ex{y}{T_{S(x,y)}} + \Ex{S(x, y)}{T_y}}.
$$
For any two states $k, \ell$ in a Markov chain with eigenvalue gap $1-\l$ we have
$$
\Ex{k}{T_\ell} \leq O(\log n) + \frac{1}{1-\l} \frac{1}{\pi_\ell},
$$
so
$$
P_y(T_S < T_y^+) \geq \frac{n}{O(\log n) + \frac{1}{1-\l}\frac{n}{|S(x, y)|} + \frac{n}{1-\l}} \geq \frac{1-\l}{2}.
$$
\end{proof}

\end{proof}

\section{The set $\ol X$}
\label{sec:uniform}

The walk $W(t)$ induces a colouring on the edges and vertices of $G_r$ as follows. An edge is coloured red, green or blue if it has been visited zero, one or at least two time(s), respectively. A vertex is (i) green if it is incident to exactly $r-1$ green edges and one red edge, (ii) red if it is incident to red edges only, and (iii) blue otherwise.

Recall that $X_i(t)$ denotes the set of vertices incident to exactly $i$ red edges in $W(t)$. We let $X_1^g(t)$, $X_1^b(t)$ denote the green and blue vertices of $X_1(t)$, respectively, and set
$$
Z(t) = X_1^b(t) \cup \bigcup_{i=2}^r X_i(t).
$$
We have $\ol X(t) = X_1^g(t) \cup Z(t)$. Note that the number of configuration points not visited by $W(t)$ is exactly $rn-2t+1$. We will eventually show that $|X_1^g(t)| = (rn-2t)(1 - o(1))$ and $|Z(t)| = o(rn-2t)$, so that $X_1^g(t)$ makes up almost all of $\ol X(t)$ when $t = (1-\d)\frac{rn}{2}$ for some $\d = o(1)$. In this section we present a ``sprinkling'' tool used to show that $X_1^g(t)$ is a root set of order $\om$, which will imply that $\ol X(t)$ is also a root set of order $\om$. Before this, we state our results on the necessary set sizes.

\subsection{Set sizes}

Define
\begin{equation}\label{eq:didef}
\d_0 = \frac{1}{\log\log n},\ \d_1 = \frac{1}{\log^{1/2} n},\ \d_2 = \frac{1}{\log^2 n},\ \d_3 = n^{-3/4},\ \d_4 = n^{-1}\log n,
\end{equation}
and $t_i = (1-\d_i)\frac{rn}{2}$ for $i=0,1,2,3$. From this point on we will use $t$ and $\d$ interchangeably to denote time, and the two are always related by $t = (1-\d)\frac{rn}{2}$.

\begin{lemma}\label{lem:concentration}
Let $\om$ tend to infinity arbitrarily slowly, and let $0 < \e < r-2$. Then
\begin{enumerate}[(i)]
\item for any fixed $t\geq t_1$, w.h.p. the number of green vertices in $X_1(t)$ is
$$
X_1^g(t) = rn\d(1-o(1)),
$$
\item for any fixed $t_1 \leq t \leq t_3$, w.h.p. the number of green edges satisfies
$$
\F(t) \geq n\d^{1/2},
$$
\item for any fixed $t\geq t_1$, w.h.p. the size of $Z(t)$ is
$$
Z(t) = O(n\d^{3/2}),
$$
\item for any fixed $t \geq t_1$, w.h.p. the number of unvisited vertices is
$$
X_r(t) = n\d^{r/2}(1 + o(1)).
$$
\end{enumerate}
\end{lemma}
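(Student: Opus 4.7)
My plan is to prove all four parts by a trajectory analysis of the \emph{productive} steps of the walk --- steps at which a new edge is exposed --- followed by a second-moment argument for concentration. At a productive step, $\mu(x_k)$ is revealed uniformly at random from the pool of as-yet-unexposed configuration points, whose size at time $t$ is $U(t) = rn - 2t + O(1)$. If a vertex $v$ still has $d_v$ unvisited configuration points, the conditional probability that the newly exposed partner lies in $\PP(v)$ is $d_v/U(t)$. Since each visit to a vertex consumes two of its configuration points, a vertex lies in $X_r(t)$ iff it has survived every productive step without being selected, giving
$$
\Pr(v \in X_r(t)) = \prod_{s=1}^{t}\left(1 - \frac{r}{U(s)}\right) \sim \bfrac{rn-2t}{rn}^{r/2} = \d^{r/2},
$$
so $\E{X_r(t)} \sim n\d^{r/2}$, proving part (iv) in expectation. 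The same framework, tracking both the number of landings at a vertex and the order in which its configuration points are exposed, gives $\E{X_1^g(t)} \sim rn\d$ for part (i): a green vertex in $X_1$ is one landed on exactly $(r-1)/2$ times with each visit consuming a previously unused outgoing configuration point.

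To upgrade these expectations to whp statements I would compute second moments. For distinct $v, w$, the indicators for $v \in X_r(t)$ and $w \in X_r(t)$ are only weakly dependent: conditioning on $v$ remaining untouched perturbs the hazard rate for $w$ by a factor $1 + O(1/U(s))$, and $U(s)$ decreases deterministically regardless of which vertex is hit. The resulting estimate $\Var(X_r(t)) = o(\E{X_r(t)}^2)$ combines with Chebyshev to finish (iv); an analogous computation, with extra bookkeeping over the orderings of entries and exits at pairs of vertices, handles (i).

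For (ii) I would use a birth/death count on green edges. Each edge is born green when first revealed, and can only turn blue by being re-traversed during one of the simple-random-walk phases triggered whenever the walk reaches a vertex with no unvisited edges. Bounding the expected number of SRW re-traversals using the hitting-time estimate \eqref{eq:js}, and combining with the birth count $\sim rn/2$, yields the lower bound $\F(t) \geq n\d^{1/2}$ for $t_1 \leq t \leq t_3$ whp. Part (iii) is a union bound: $Z(t)$ is contained in the set of vertices landed on at least twice or having at least one blue edge. The trajectory analysis bounds each contribution in expectation by $O(n\d^{3/2})$ --- the exponent $3/2$ arising from the need for two independent ``landings'' beyond the first --- and Markov's inequality finishes.

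The main obstacle is controlling the non-productive steps, during which the walk performs a simple random walk on $G(t)$ before finding its next unvisited edge. These steps do not shrink $U$, yet they can re-traverse existing edges, contaminating the clean product formula in the hazard argument and threatening the lower bound in (ii). The remedy is to re-index time by productive steps only and absorb the SRW phases into an error term controlled by the hitting-time bound \eqref{eq:js} together with the fact that $\ol X(t)$ is not too small throughout the range considered. Making this absorption quantitative uniformly in $t$, without compounding corrections across the trajectory analysis, is where the most delicate bookkeeping is required.
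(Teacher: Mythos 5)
Your general blueprint---first moment via a product formula, second moment or Markov for concentration, and a re-indexing by productive steps---matches the paper's strategy for part (iv), where the argument really is the clean survival product $\prod_s (1-r/(rn-2s-1)) \sim \d^{r/2}$ followed by Chebyshev. But for parts (i), (ii), and (iii) I see genuine gaps.

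For (iii), the heuristic ``the exponent $3/2$ arises from the need for two independent landings beyond the first'' is not correct, and it hides where the real work lies. The set $Z(t)$ contains vertices in $X_i$ for $i\geq 3$, which are \emph{under}-visited (landed on at most $(r-3)/2$ times, not ``at least twice''), together with the $X_1^b$ vertices touched by a blue walk. The paper's proof decomposes $Z(t)$ by the vector $\ell=(\ell_1,\dots,\ell_k)\in\LL$ of new configuration points consumed per visit, bounds $\Prob{1\in Z_\ell(t)}$ by a sum over visit times, and shows the resulting exponent $r/2-k+N$ (where $N$ is the number of $\ell_j=1$) is always $\geq 3/2$ by a parity argument that uses $r$ odd and the exclusion of the all-$2$ vector. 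Crucially, to bound the per-step hazard one needs the reversibility lemma (Lemma~\ref{lem:blueend}) to show the probability that a blue walk terminates at a given vertex is $O(1/|X_1|)$; your trajectory framework silently assumes the only hazard at a vertex is the productive step, but blue walks also change a vertex's status and your proposal gives no handle on that probability.

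For (i), the paper does not run a direct first/second-moment computation on $X_1^g$ at all---it derives (i) from (iii) via the identity $rn-2t=\sum_i iX_i(t)$, which gives $X_1^g(t) \geq (rn-2t)-rZ(t)$, and the upper bound $X_1^g(t)\leq rn-2t$ is free. A direct argument along your lines would need to control, jointly, all blue-walk passages near a vertex to certify it remains green; this is substantially harder than the paper's route and you do not indicate how to tame those dependencies.

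For (ii), ``bound the expected number of SRW re-traversals using the hitting-time estimate'' does not suffice. The hitting-time bound \eqref{eq:js} controls how long the blue walk lasts, but not how many \emph{distinct green edges} it ruins. The paper's argument is different in kind: it conditions on the equivalence class $[W(t)]$, observes that the green links are distributed over the $\F(t)$ green edges by a P\'olya-urn process, and uses this to show that the number of green edges consumed in one blue walk is stochastically dominated by a geometric random variable with success probability $L(t)/\F(t)$. That structural insight, combined with a supermartingale/MGF recursion on $\F(t)$, is what yields the $n\d^{1/2}$ lower bound; there is no obvious replacement via raw hitting times.
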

Parts (i), (iii) and (iv) are proved in Section \ref{sec:setsizes}, and part (ii) in Section \ref{sec:greenedges}.

\subsection{A sprinkling technique}

The green edges and vertices are the focus of this section. Suppose $W = W(t) = (x_0, x_1,\dots,x_k)$ and that $x_{2i}, x_{2i+1}, x_{2i + 2}, x_{2i+3}$ are consecutive configuration points visited exactly once by $W$, where $x_{2i+1}, x_{2i+2}$ belong to a green vertex $v$. In this situation we call the pair $(x_{2i + 1}, x_{2i+2})$ a {\em green link}. Let $L(W) \subseteq \PP\times \PP$ be the set of green links in $W$, and form the {\em contracted walk} $\langle W\rangle$ by removing all green links from $W$, i.e. replacing the two edges $(x_{2i}, x_{2i+1}), (x_{2i+2}, x_{2i+3})$ by the single edge $(x_{2i}, x_{2i+3})$ for all green links $(x_{2i+1}, x_{2i+2})$. Two walks $W_1, W_2$ are said to be equivalent if $\langle W_1\rangle = \langle W_2\rangle$ and $L(W_1) = L(W_2)$, and we let $[W] = (\langle W\rangle, L)$ denote the equivalence class of the walk $W$.

\begin{lemma}\label{lem:uniform}
If $W$ is such that $\Prob{[W(t)] = [W]} > 0$, then
$$
\Prob{W(t) = W \mid [W(t)] = [W]} = \frac{1}{|[W]|}.
$$
\end{lemma}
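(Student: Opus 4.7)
The plan is to show that $\Prob{W(t) = W'}$ takes a common value across all $W' \in [W]$; granting this, the conditional claim is immediate. For a specific walk $W' = (x_0, x_1, \dots, x_k)$ I would compute $\Prob{W(t) = W'}$ as $\frac{1}{rn}$ times a product of per-step factors. A step at even index $j$ (a $\m$-step from $x_j$ to $x_{j+1} = \m(x_j)$) contributes a factor of $1$ if $\m(x_j)$ has already been revealed, and otherwise a factor of $1/(rn - 1 - 2N_j)$, where $N_j$ is the number of $\m$-pairs revealed by step $j$. A step at odd index $j$ (a within-vertex choice at some vertex $v$) contributes $1/U_j$, where $U_j$ is the current number of unvisited points of $\PP(v)$, or $1/r$ if $U_j = 0$.

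Next I would argue that every such factor depends only on the class $[W'] = (\langle W'\rangle, L(W'))$. The sequence of vertices visited by $W'$ and the per-vertex visit-counts are determined by $\langle W'\rangle$ together with the fixed insertion positions of green links, both of which are invariants of $[W]$. At the odd step inside the green link corresponding to the $m$-th visit to a green vertex $v$, the factor equals $1/(r - 2m + 1)$ and so depends only on $m$, not on which points of $\PP(v)$ fill the link. At every even step, both the ``fresh reveal vs.\ already matched'' status and the count $N_j$ are similarly dictated: the two $\m$-pairs $(x_{2i}, x_{2i+1})$ and $(x_{2i+2}, x_{2i+3})$ framing each green link must both be fresh reveals because by definition all four of $x_{2i}, x_{2i+1}, x_{2i+2}, x_{2i+3}$ are visited exactly once in $W'$, while the remaining even steps involve only points fixed by $\langle W'\rangle$.

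The main point I expect to treat most carefully is that distinct $W_1, W_2 \in [W]$ genuinely correspond to different matchings in the interior of the green vertices: permuting which green link $\{a, b\} \in L$ is placed at which green-vertex visit-slot replaces boundary $\m$-pairs $(y_{\text{in}}, a), (y_{\text{out}}, b)$ by $(y_{\text{in}}, c), (y_{\text{out}}, d)$. I need to verify that any such swap leaves the numerical counts $N_j$ and $U_j$ unchanged at every step, and that corresponding steps in $W_1$ and $W_2$ perform the same \emph{kind} of event (fresh reveal, forced $\m$-step, or within-vertex choice from a given count). Both invariances are enforced by the green-link definition, which ensures that the swap can only change which specific fresh points are involved, not the running counts of matched or visited points. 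Multiplying the resulting common per-step factors gives a single value of $\Prob{W(t) = W'}$ on $[W]$, and the lemma follows.
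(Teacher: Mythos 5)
Your overall approach---computing $\Prob{W(t) = W}$ as $\tfrac{1}{rn}$ times a product of per-step factors and arguing that the product is a class invariant---is the same as the paper's, and your bookkeeping of $\mu$-steps and within-vertex steps is the right decomposition. The gap is in the intermediate claim that \emph{each individual per-step factor} is determined by $[W']$. You support this by asserting that ``the fixed insertion positions of green links \dots\ are invariants of $[W]$,'' and your later discussion treats the only freedom as permuting which link occupies which of a fixed collection of visit-slots. Neither is correct: $[W]$ is determined only by the contracted walk $\langle W\rangle$ and the \emph{set} $L$ of links, so two equivalent walks may place the links into entirely different green edges of $\langle W\rangle$, with different multiplicities per edge. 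Concretely, if $\langle W\rangle = (\dots,a,b,e,f,c,d,\dots)$ with $(a,b)$ and $(c,d)$ green and $(e,f)$ a re-used blue edge, and $L=\{(p,q)\}$, then both $W_1 = (\dots,a,p,q,b,e,f,c,d,\dots)$ and $W_2 = (\dots,a,b,e,f,c,p,q,d,\dots)$ lie in $[W]$. Aligning the steps by position, the step that traverses $(e,f)$ is preceded by a different number $N_j$ of revealed $\mu$-pairs in $W_1$ than in $W_2$, and the fresh-vs.-reused status of the neighbouring $\mu$-steps also differs. So per-step invariance fails.

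What does hold---and what the paper records---is invariance of the \emph{multiset} of factors. Every $W'\in[W]$ has exactly $t$ fresh $\mu$-steps, and their factors are $\tfrac{1}{rn-1},\tfrac{1}{rn-3},\dots$ in some order, so $\prod_{s=0}^{t-1}\tfrac{1}{rn-2s-1}$ is independent of the timing; and each green vertex is visited exactly $(r-1)/2$ times, contributing within-vertex factors $\tfrac{1}{r-1},\tfrac{1}{r-3},\dots,\tfrac12$ once each regardless of which link lands in which visit (your formula $1/(r-2m+1)$ for the $m$-th visit is exactly right), while the remaining within-vertex steps are fixed by $\langle W\rangle$. This is precisely the content of the paper's claim that the exponents $i_k$ in $\prod_k k^{-i_k}$ and the number of revealed edges are class invariants. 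Your argument reaches the correct conclusion, but the invariance you need should be stated for the product (equivalently the multiset of factors), not step-by-step; as written, the step-by-step claim is false and the proof has a gap at that point.
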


\begin{proof}
Let $W$ be a walk with $\Prob{W(t) = W} > 0$. We can calculate the probability of $W(t) = W$ exactly. There are two different types of steps a walk can take. Suppose the walk has visited $t$ distinct edges. If the walk occupies a vertex incident to no red edges, it chooses an edge with probability $r^{-1}$. If the walk occupies a vertex incident to $k$ red edges, it chooses one of the $k$ red edges with probability $k^{-1}$. The other endpoint of the red edge is chosen uniformly at random from $rn-2t-1$ configuration points. So the probability of $W(t) = W$ is
$$
\Prob{W(t) = W} = \frac{1}{rn} \prod_{k = 2}^{r} k^{-i_k} \prod_{s=0}^t \frac{1}{rn-2s-1},
$$
for some integers $i_2,\dots,i_r \geq 0$, counting the number of steps of the different types. The $1/rn$ factor accounts for the starting point of the walk. Now, if $W_1\sim W_2$, then $W_1$ and $W_2$ contain the same number of edges, and $i_k(W_1) = i_k(W_2)$ for $k=2,\dots,r$. Indeed, $W_1$ and $W_2$ only disagree in which order they visit the links in $L$.
\end{proof}

Conditioning on $[W(t)] = (\langle W(t)\rangle, L)$, we can now generate $W(t)$ as follows. Suppose $\langle W(t)\rangle = (x_{\ang 0}, x_{\ang 1}, \dots,x_{\ang k})$ is the contracted walk with $\f$ green edges. Let $F$ denote the set of green edges in $W(t)$. Arbitrarily assigning some order to $L$, let $(p_1, q_1)$ denote the first link, and choose some $(x_{\ang{2i}}, x_{\ang{2i+1}})\in F$ uniformly at random. We reroute the edge $(x_{\ang{2i}}, x_{\ang{2i+1}})$ through $(p_1, q_1)$, forming
$$
W_1 = (x_{\ang 0}, \dots,x_{\ang{2i}}, p_1, q_1, x_{\ang{2i+1}}, \dots,x_{\ang k}).
$$
Form $F_1$ from $F$ by replacing $(x_{\ang{2i}}, x_{\ang{2i+1}})$ by $(x_{\ang{2i}}, p_1)$ and $(q_1, x_{\ang{2i+1}})$. We repeat the above with the next $(p_2, q_2) \in L$, forming $W_2, W_3,\dots$, until all links have been placed in the walk. The final walk is $W(t)$. We refer to this as sprinkling the links into the green edges of the contracted walk, and note that the initial green edges $F$ can be seen as the urns in a P\'olya urn process.

\subsection{Set structure}

\begin{lemma}\label{lem:sprinkling}
Suppose $[W(t)]$ is an equivalence class with $rn-2t+1$ free configuration points, where $rn-2t \to\infty$ and $rn-2t = o(n)$, and suppose $\F(t) \geq n\d^{1/2}$, $X_1^g(t) = rn\d(1-o(1))$ and $Z(t) = O(n\d^{3/2})$. Let $\om$ tend to infinity arbitrarily slowly. If $W(t)$ is chosen uniformly at random from $[W(t)]$, then with high probability, the set $\ol X(t)$ associated with $W(t)$ is a root set of order $\om$.
\end{lemma}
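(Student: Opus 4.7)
By Lemma \ref{lem:uniform}, conditional on $[W(t)]$, the walk $W(t)$ is obtained by sprinkling the $|L|=(r-1)|X_1^g(t)|/2=\Theta(n\d)$ links one at a time into uniformly random current bins among the $\F(t)\ge n\d^{1/2}$ green edges of $\ang{W(t)}$. Since $\ol X(t)$ is already determined by $[W(t)]$, we have $|\ol X(t)|=rn\d(1-o(1))$ and $|L|/\F=O(\d^{1/2})\to 0$, so a typical bin receives no link. The plan is to verify each clause of Definition \ref{def:rootset} in turn using this randomness, choosing $\om$ to grow slowly enough for all inequalities below.

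Clause (i) is immediate: $|\ol X(t)|\gg\om^5$ whenever $\om$ grows slowly. For clause (ii), decompose edges inside $\ol X$ into red and non-red parts. Every red edge lies in $\ol X$, contributing a total $R=\tfrac12\sum_i i|X_i(t)|$. Since $2R-|\ol X|=\sum_{i\ge 2}(i-1)|X_i|\in[0,r|Z(t)|]=[0,O(n\d^{3/2})]$, we get $R\ge|\ol X|/2$ automatically, establishing the lower bound in (ii). A non-red edge of $W(t)$ can lie inside $\ol X$ only via one of (a) two links falling into a common bin (producing an $X_1^g$--$X_1^g$ edge), (b) a link sprinkled into a bin with a $Z$-endpoint (producing an $X_1^g$--$Z$ edge), or (c) a bin of $\ang{W(t)}$ with both endpoints in $Z$, since the vertices of $X_1^g$ have been contracted out of $\ang{W(t)}$. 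Their counts are in expectation $O(|L|^2/\F)=O(n\d^{3/2})$ and $O(|L|\cdot r|Z|/\F)=O(n\d^2)$, and deterministically $O(r|Z|)=O(n\d^{3/2})$, respectively. Markov then gives a w.h.p.\ bound $O(n\d^{3/2}\omega')$ on the non-red contribution for any slow $\omega'\to\infty$, which is $o(|\ol X|/\om^3)$ provided $\d^{1/2}\omega'\om^3=o(1)$, and this holds for $\om$ slow enough.

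For clause (iii), contract $\ol X$ to a vertex $s$ to form $G^*(t)$; an external path of length $\le\om$ between two $\ol X$-vertices corresponds to a closed walk at $s$ in $G^*(t)$ that uses no self-loop. Split contributions into (a) walks supported on a short cycle of $G_r$, and (b) walks arising because the sprinkling places two distinct $X_1^g$-vertices within external distance $\le\om$ of each other. For (a), by Lemma \ref{lem:Grproperties}(ii), w.h.p.\ $G_r$ has at most $\om r^\om$ cycles of length $\le\om$, each spawning $O(\om^2)$ closed walks at $s$, giving a total of $O(\om^3 r^\om)$. For (b), since each vertex has at most $r^\om$ walks of length $\le\om$ from it and the $\ol X$-density in $G_r$ is $|\ol X|/n=O(\d)$, the expected number of such ordered pairs is $O(|\ol X|\cdot r^\om\cdot\d)=O(n\d^2 r^\om)$. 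Both contributions are $o(n\d/\om^3)=o(|\ol X|/\om^3)$ provided $r^\om\om^3=o(\d^{-1/2})$, which holds for $\om$ slow enough. The main obstacle is (b): general length-$\om$ external paths require combining the sparsity $|L|/\F=o(1)$ of sprinkled links with the few-short-cycles property of $G_r$ (Lemma \ref{lem:Grproperties}(ii)) in order to keep the accidental path count within the $|\ol X|/\om^3$ budget.
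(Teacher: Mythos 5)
Your proof takes the same overall route as the paper's: condition on $[W(t)]$, invoke Lemma~\ref{lem:uniform} to view $W(t)$ as a random sprinkling of the $|L|=\frac{r-1}{2}|X_1^g(t)|=\Theta(n\delta)$ links into the $\Phi(t)\geq n\delta^{1/2}$ green bins, and combine the resulting anti-clustering of green vertices with the scarcity of short cycles in $G_r$. Where you go further than the paper is in checking Definition~\ref{def:rootset} clause by clause: the paper only sketches clause~(iii) and absorbs (i) and (ii) into the informal statement that ``$X_1^g$ makes up almost all of $\ol X$.'' Your clause-(ii) bookkeeping -- writing $R=\tfrac12\sum_i i|X_i|$ for the red contribution, observing $0\leq 2R-|\ol X|\leq r|Z(t)|$, and splitting the non-red internal edges into cases (a) two links in one bin, (b) one link in a bin adjacent to $Z$, (c) $Z$--$Z$ edges -- is a genuine and useful addition not present in the paper.

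For clause~(iii) there is a gap worth flagging. Your count $O(|\ol X|\cdot r^\om\cdot\delta)=O(n\delta^2 r^\om)$ rests on the heuristic that each of the $\leq r^\om$ vertices near a fixed $\ol X$-vertex belongs to $\ol X$ with ``density'' $|\ol X|/n=O(\delta)$. But the sprinkling places the links into only $\Phi(t)\geq n\delta^{1/2}$ bins, a sparse subset of the $\Theta(n)$ edges, so the conditional density of $X_1^g$-vertices near another $X_1^g$-vertex can be as large as $|L|/\Phi=O(\delta^{1/2})$, not $O(\delta)$. The paper avoids this by directly summing over ordered pairs of links the probability $O(r^\om/\Phi(t))$ of landing within graph distance $\om$, giving $O(L^2 r^\om/\Phi)=O(n\delta^{3/2}r^\om)$, which is weaker than your figure but still $o(|\ol X|/\om^3)$. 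Tellingly, your stated threshold $r^\om\om^3=o(\delta^{-1/2})$ is exactly what the $O(n\delta^{3/2}r^\om)$ bound needs (the $O(n\delta^2 r^\om)$ bound would only need $r^\om\om^3=o(\delta^{-1})$), and your closing remark that one must ``combine the sparsity $|L|/\Phi=o(1)$ of sprinkled links'' with few-short-cycles is precisely the paper's argument. So the obstacle you flag is real, and replacing the density heuristic by the explicit $O(L^2 r^\om/\Phi)$ pair-count from the sprinkling closes it.
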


\begin{proof}
Say that $v \in X_1^g$ is {\em bad} if its distance to the rest of $\ol X$ is at most $\om$, or if it lies on a cycle of length at most $\om$. Otherwise it is {\em good}. As $X_1^g$ makes up almost all of $\ol X = X_1^g \cup Z$, to show that $\ol X$ is a root set of order $\om$ it is enough to show that the number of bad vertices in $X_1^g$ is at most $X_1^g / \om^3$.

By Lemma \ref{lem:Grproperties}, at most $\om r^\om = o(X_1^g(t))$ vertices lie on cycles of length at most $\om$. Let $\ell_1, \ell_2\in L$ be links. As they are sprinkled onto the $\f = \OM(n\d^{1/2})$ green edges of $[W(t)]$, the probability that they are placed within distance $\om$ of each other is $O(r^\om / \f(t))$. Indeed, any green edge in $[W(t)]$ is within graph distance at most $\om$ of at most $r^\om$ other green edges. The expected number of pairs $\ell_1, \ell_2$ within distance $\om$ of each other is bounded by
\begin{equation}\label{eq:linkunionbound}
\sum_{\ell_1 \neq \ell_2} O\bfrac{r^\om}{\f(t)} = O\bfrac{L(t)^2r^\om}{\f(t)} = O\bfrac{n^2 \d^2 r^\om}{n\d^{1/2}} = o(n\d).
\end{equation}
As $X_1^g = \OM(n\d)$, choosing $\om$ small enough this shows that all but $\ol X(t)/\om^3$ of the links are at distance at least $\om$ from other links. We conclude that $\ol X(t)$ is a root set of order $\om$.
\end{proof}

\section{Calculating the cover time}\label{sec:covertime}

Recall the definitions \eqref{eq:didef} of $\d_i$ and $t_i$, $0\leq i\leq4$. We begin by showing that the time taken to find the first $t_1$ edges contributes insignificantly to the cover time.
\begin{lemma}\label{lem:phaseone}
$$
\E{C(t_1)} = o(n\log n).
$$
\end{lemma}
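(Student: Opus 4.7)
The plan is to split $C(t_1)$ into the $2t_1$ edge-traversals that expose a new edge and the extra wandering steps taken on the visited subgraph when the walk arrives at a vertex in $X_0$. Writing $\tau_k$ for the number of edge-traversals the walk takes between its $k$-th and $(k+1)$-th edge discoveries (so $\tau_k=0$ when the vertex reached by the $k$-th discovery already lies in $\ol X(k)$), one has
\[
C(t_1) = 2t_1 + 2\sum_{k=0}^{t_1-1}\tau_k,
\]
and during each wandering phase the biased walk is identical to a simple random walk on the visited multigraph $G(k)$, stopped upon hitting $\ol X(k)$.

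The heart of the argument is to bound $\E{\tau_k}=O(1/\d_1)$ uniformly in $k\leq t_1$. Because a vertex that enters $X_0$ never leaves, the map $k\mapsto|\ol X(k)|$ is non-increasing, so $|\ol X(k)|\geq|\ol X(t_1)|$ for every $k\leq t_1$. By Lemma~\ref{lem:concentration}(i),(iii), $|\ol X(t_1)|\geq|X_1^g(t_1)|=rn\d_1(1-o(1))$ w.h.p.; combined with the identity $\sum_{i\geq1}i|X_i(k)|=rn-2k$, a short calculation shows that the stationary mass $\pi_{\ol X(k)}$ of $\ol X(k)$ in $G(k)$ (with respect to visited degrees) is $\OM(\d_1)$. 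Since the eigenvalue-gap bound $\l^*(k)\leq 0.99$ holds by Lemma~\ref{lem:Grproperties}(i), the stopping-time convention around $T^*$, and \cite[Corollary 3.27]{af}, the hitting-time estimate \eqref{eq:js} gives $\E{\tau_k\mid W(k)}=O(1/\d_1)$ for every $k\leq t_1$.

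Summing, $\E{C(t_1)}\leq 2t_1+2t_1\cdot O(1/\d_1)=O(n\log^{1/2}n)=o(n\log n)$, as desired.

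The main obstacle is that \eqref{eq:js} bounds the hitting time of $\ol X(k)$ from a stationary start, whereas $\tau_k$ begins at the specific endpoint of the $k$-th discovered edge. For a reversible chain with constant spectral gap the deviation $|\Ex{v}{H(S)}-\Ex{\pi}{H(S)}|$ is controlled by the relaxation time $1/(1-\l)=O(1)$ through the fundamental-matrix identity of Lemma~\ref{lem:hitting}, so the correction is absorbed into the $O(1/\d_1)$ per-step bound; if a sharper bound is required one can amortize over excursions, noting that after a typical wandering the walk is close to stationary on $G(k)$ and therefore only rarely starts the next wandering at a worst-case vertex. The w.h.p.\ event from Lemma~\ref{lem:concentration} is handled by conditioning on $G_r$ and $W(t_1)$ and bounding the contribution of the low-probability complementary event by the trivial bound $\tau_k\leq C(t_1)\leq O(n^2)$.
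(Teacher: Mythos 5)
The decomposition $C(t_1)=2t_1+2\sum_k\tau_k$ and the strategy of bounding each wandering excursion by a hitting-time estimate are in the right spirit, and your observation that $\ol X(k)$ is non-increasing (hence $|\ol X(k)|\geq|\ol X(t_1)|=\Omega(n\d_1)$ for all $k\leq t_1$) is a legitimate way to control the stationary mass of the target throughout phase one. But the claimed per-step bound $\E{\tau_k\mid W(k)}=O(1/\d_1)$ is where the argument breaks, and you have in fact named the relevant obstacle yourself.

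The issue is that $\tau_k$ is a hitting time from the \emph{specific} vertex at which the walk sits after the $k$-th discovery, not from $\pi$. Your claim that the deviation $|\Ex{v}{H(S)}-\Ex{\pi}{H(S)}|$ is controlled by the relaxation time $1/(1-\l)=O(1)$ is not correct. Via Lemma~\ref{lem:hitting} the deviation is $|Z_{vs}|/\pi_s$, and the only pointwise bound the eigenvalue gap gives is $|Z_{vs}|\leq\sqrt{\pi_s/\pi_v}/(1-\l)$, which leads to a correction of order $1/\sqrt{\pi_s\pi_v}$, far larger than $O(1)$. The sharper mixing-time argument (iterate ``run for $t_{\mathrm{mix}}$ steps, then apply Markov from near-$\pi$'') only gives $\max_v\Ex{v}{H(S)}=O\bigl(t_{\mathrm{mix}}+\Ex{\pi}{H(S)}\bigr)=O(\log n)$. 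This is tight: a starting vertex at graph distance $\Theta(\log n)$ from $\ol X(k)$ already forces $\tau_k\geq\Theta(\log n)$, and since $1/\d_1=\log^{1/2}n\ll\log n$, the worst-case term dominates. Plugging $O(\log n)$ into your sum gives $\E{C(t_1)}=O(n\log n)$, which is exactly the order you need to beat, so the lemma is not established. (A secondary point: the ``short calculation'' that $\pi_{\ol X(k)}=\Omega(\d_1)$ with respect to $G(k)$-degrees does not follow from $|\ol X(k)|\geq|\ol X(t_1)|$ alone, since $\deg_{G(k)}(\ol X(k))=r|\ol X(k)|-(rn-2k)$ can be small for intermediate $k$; the cleanest fix is to observe that the wandering walk coincides with a simple random walk on $G_r$ itself, so one may use uniform $\pi$ directly.)

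What makes the paper's argument go through is precisely the structure of the starting distribution that you defer to the hand-wave ``amortize over excursions.'' After the $t$-th discovery the walk's landing vertex is \emph{uniformly random} over $X_1(t)$ (each $X_1$ vertex has exactly one unvisited configuration point), and the paper exploits this by applying the Ajtai--Koml\'os--Szemer\'edi theorem (Theorem~\ref{thm:aks}) on the fixed $r$-regular graph $G_r$. Because AKS bounds the \emph{total number} of length-$\ell$ walks avoiding a fixed set, one gets a bound on the hitting time averaged over the uniform start; the splitting of $X_1$ into $A,A'$ handles the fact that the target $\ol X(t)\setminus\{u\}$ depends on the start $u$. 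This yields $\E{C(t+1)-C(t)}\leq 1+O\bigl(n^2/(rn-2t)^2\bigr)$, which sums to $O(n/\d_1)=O(n\log^{1/2}n)=o(n\log n)$. The extra $(rn-2t)^{-1}$ factor relative to your estimate is exactly the gain from averaging the start over $X_1(t)$, and without it the bound does not close.
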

This is proved in Section \ref{sec:phaseone}. We then move on to estimating the expected cover time increment for larger $t$.
\begin{lemma}\label{lem:phasetwothree}
For $t_1 \leq t \leq t_4$ and any $\e > 0$,
$$
\E{C(t + 1) - C(t)} = \left(\frac{r}{r-2} \pm \e\right) \frac{n}{rn-2t}.
$$
\end{lemma}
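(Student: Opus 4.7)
The plan is to condition on the walk $W(t)$ just after it exposes its $t$th edge and decompose $C(t+1)-C(t)$ into a short transient contribution plus the hitting time of $\ol X(t)$ for a simple random walk on $G(t)$. Let $v$ be the vertex occupied immediately after the $t$th edge is exposed. If $v$ still has an unvisited edge then the $(t+1)$th edge is exposed within $O(1)$ additional steps; otherwise the biased walk coincides with a simple random walk on $G(t)$ until it first reaches $\ol X(t)$, at which point a new red edge is taken and the $(t+1)$th edge is exposed. Up to an additive $O(1)$,
$$
C(t+1)-C(t) = \ind{v\notin \ol X(t)}\cdot H_v(\ol X(t)),
$$
where $H_v(S)$ is the simple-random-walk hitting time of $S$ from $v$ in $G(t)$. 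The typical case is $v\notin\ol X(t)$: since the endpoint $v$ corresponds to a uniformly chosen previously-unvisited configuration point, with probability $1-o(1)$ it belongs to a vertex of $X_1(t-1)$, whose sole unvisited point is then consumed by the edge exposure.

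The main term comes from $\E{H_v(\ol X(t))}$, and for this I would apply Lemma \ref{roothitting}. By Lemma \ref{lem:Grproperties}(i)(ii), with high probability $G_r$ has eigenvalue gap at least $0.01$ and at most $\om r^\om$ cycles of length at most $\om$, both preserved under vertex contraction. By Lemma \ref{lem:concentration}, w.h.p.\ $|X_1^g(t)| = rn\d(1-o(1))$, $\F(t)\geq n\d^{1/2}$, and $|Z(t)| = O(n\d^{3/2})$, so $|\ol X(t)| = (rn-2t)(1+o(1))$. Conditioning on the equivalence class $[W(t)]$ and resampling $W(t)$ uniformly within it via Lemma \ref{lem:uniform}, Lemma \ref{lem:sprinkling} shows that $\ol X(t)$ is w.h.p.\ a root set of order $\om$, so Lemma \ref{roothitting} applies and yields
$$
\Ex{\pi}{H(\ol X(t))} \sim \frac{r}{r-2}\frac{n}{|\ol X(t)|} = \frac{r}{r-2}\frac{n}{rn-2t}(1+o(1)).
$$

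The main obstacle is that the starting vertex $v$ is not uniform on $V$ but is concentrated, with non-uniform weights, on $\ol X(t-1)$. To reduce to the uniform-start case I would use the Aldous--Fill identity
$$
\Ex{v}{H(\ol X(t))} - \Ex{\pi}{H(\ol X(t))} = -Z_{vx}/\pi_x
$$
in the contracted graph $G^*(t)$, combined with the uniform-sampling representation of Lemma \ref{lem:uniform}, under which the endpoint $v$ is approximately uniform over the $rn-2t+1$ still-unvisited configuration points. After averaging over $v$, the resulting correction should be $o(n/(rn-2t))$ and absorbed into the multiplicative $\e$ error. The Case A contribution, of order $O(1)\cdot\Prob{v\in\ol X(t)} = O((rn-2t)/n)$, is also $o(n/(rn-2t))$ for $t\geq t_1$ since $rn-2t \leq rn\log^{-1/2}n$.

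Finally I would assemble: on the $(1-o(1))$-probability event that all the above w.h.p.\ statements hold, the expected increment equals $\frac{r}{r-2}\frac{n}{rn-2t}(1\pm\e)$, and on the complementary event a crude polynomial bound on $C(t+1)-C(t)$ (using connectedness of $G_r$ and the eigenvalue-gap bound) multiplied by the $o(1)$ failure probability contributes $o(n/(rn-2t))$ to the expectation, yielding the stated estimate.
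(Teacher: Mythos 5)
Your high-level structure matches the paper's: condition on whether $v\notin\ol X(t)$, treat the increment as a simple-random-walk hitting time of $\ol X(t)$, feed the root-set property from Lemma~\ref{lem:sprinkling} into Lemma~\ref{roothitting} to obtain the $\tfrac{r}{r-2}\tfrac{n}{rn-2t}$ main term, and control the bad event via the eigenvalue-gap bound \eqref{eq:js}. There are, however, two issues.

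First, and most concretely, your invocation of Lemmas~\ref{lem:concentration} and~\ref{lem:sprinkling} does not cover the full range $t_1\le t\le t_4$. Lemma~\ref{lem:concentration}(ii), which supplies the hypothesis $\F(t)\ge n\d^{1/2}$ needed by Lemma~\ref{lem:sprinkling}, is only established for $t_1\le t\le t_3$. For $t_3<t\le t_4$ the paper has a separate argument: since $\E{Z(t_3)}=O(n\d_3^{3/2})=o(1)$, Markov gives $Z(t)\subseteq Z(t_3)=\emptyset$ w.h.p., so $\ol X(t)=X_1^g(t)\subseteq X_1^g(t_3)$; one then repeats the sprinkling estimate \eqref{eq:linkunionbound} at time $t_3$ to show that no two members of $X_1^g(t_3)$ are within distance $\om$, and passes this down to $X_1^g(t)$. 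Without some such argument, your proof establishes the lemma only on $[t_1,t_3]$.

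Second, your treatment of the non-uniform start via the Aldous--Fill identity $\Ex{v}{H(x)}-\Ex{\pi}{H(x)}=-Z_{vx}/\pi_x$ is a genuinely different route from the paper, which instead uses that the root-set property forces $v$ to be at distance at least $\om$ from $\ol X(t)$, so the walk essentially mixes before it can hit $\ol X$ (contributing only an additive $O(\log n)$). Your claim that, after averaging over $v$, the correction term is $o(n/(rn-2t))$ is asserted rather than argued, and the naive bound $|Z_{vx}/\pi_x|\le \om + O\left(n\l^\om/\sqrt{rn-2t}\right)$ only becomes $o(n/(rn-2t))$ if $\om\gtrsim\log(rn-2t)$, which is not available since $\om$ grows arbitrarily slowly; averaging over $v$ does not obviously save you, since the starting vertices are concentrated on $X_1^g(t-1)$, not on all of $V$. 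If you want to keep the Aldous--Fill route you need to either exploit cancellation in $\sum_v Z_{vx}$ more carefully or, more simply, adopt the paper's mixing-time argument and accept the additive $O(\log n)$, which is harmless once summed over the range $t_1\le t\le t_4$.
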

The time to discover the final $O(\log n)$ edges can be bounded using \eqref{eq:js}:
$$
\E{C\bfrac{rn}{2} - C(t_4)} \leq \sum_{t=t_4}^{rn/2-1} O\bfrac{n}{rn-2s} = o(n\log n).
$$
This shows that for $t \geq t_1$,
\begin{multline}
\E{C(t)} = \E{C(t_1)} + \sum_{s = t_1}^{t - 1} \E{C(s + 1) - C(s)} \\ 
= \frac{r\pm \e}{2(r-2)}n\log\bfrac{rn}{rn-2t+1} + o(n\log n),
\end{multline}
proving the edge cover time statement of Theorem \ref{thm:exptheorem}. The vertex cover time follows from a simple argument using Lemma \ref{lem:concentration} (iv): the walk is expected to cover all but $s$ vertices at time $\d = ((s + 1)/n)^{2/r}$, which accounts for the $r/2$ factor separating the vertex and edge cover times. The full calculation is carried out for $r=3$ in \cite{CFJ}, and generalizing this to larger $r$ is trivial.

To prove Theorem \ref{thm:whptheorem} it remains to show that $\Ex{G}{C(t)} \sim \E{C(t)}$ for almost all fixed $r$-regular graphs $G$. Again, this calculation is essentially identical to that for $r = 3$ in \cite{CFJ}, and we exclude the rather lengthy calculations from this paper. Note that the proof is valid only for $t \geq t_2$, which is why the range $t_1 \leq t < t_2$ is excluded from the statement of Theorem \ref{thm:exptheorem}.

\subsection{Phase one: Proof of Lemma \ref{lem:phaseone}}\label{sec:phaseone}
With $t_1$ as in \eqref{eq:didef}, we show that $\E{C(t_1)} = o(n\log n)$. Suppose $W(t) = (x_0,x_2,\dots,x_k)$ for some $t, k$. If $x_k\in \PP(\ol X(t))$ then $x_{k + 1} = \m(x_{k})$ is uniformly random inside $\PP(\ol X(t))\setminus \{x_{k}\}$, and since $C(t+1) = C(t) + 1$ in the event of $x_{k + 1}\in \PP(X_2\cup \dots \cup X_r)$, we have
\begin{equation}\label{eq:conditionincrement}
\E{C(t+1) - C(t)} \leq 1 + \E{C(t+1)-C(t) \mid x_{k + 1}\in \PP(X_1)}\Prob{x_{k + 1}\in \PP(X_1)},
\end{equation}
We use the following theorem of Ajtai, Koml\'os and Szemer\'edi \cite{AKS} to bound the expected change when $x_{k + 1}\in \PP(X_1)$.
\begin{theorem}\label{thm:aks}
Let $G = (V, E)$ be an $r$-regular graph on $n$ vertices, and suppose that each of the eigenvalues of the adjacency matrix with the exception of the first eigenvalue are at most $\l_G$ (in absolute value). Let $A$ be a set of $cn$ vertices of $G$. Then for every $\ell$, the number of walks of length $\ell$ in $G$ which avoid $A$ does not exceed $(1-c)n((1-c)r + c\l_G)^\ell$.
\end{theorem}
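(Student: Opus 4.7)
The plan is to reformulate the walk count as a bilinear form $\ind{B}^T T^\ell \ind{B}$ in a restricted adjacency matrix, and then to bound the spectral radius of $T$ using the eigenvalue gap hypothesis. Set $B = V\setminus A$, let $M$ be the adjacency matrix of $G$, let $P_B$ be the diagonal projection onto the coordinates of $B$, and put $T = P_B M P_B$. A walk $v_0 v_1 \cdots v_\ell$ in $G$ avoids $A$ iff every $v_i\in B$, so the count in question equals exactly $\ind{B}^T T^\ell \ind{B}$.

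First I would reduce the theorem to a spectral bound on $T$. Since $T$ is symmetric, let $\mu$ denote its largest eigenvalue in absolute value. By Cauchy-Schwarz and $\|T^\ell \mathbf{x}\|\le \mu^\ell\|\mathbf{x}\|$,
$$\ind{B}^T T^\ell \ind{B}\le \|\ind{B}\|\,\|T^\ell\ind{B}\|\le \mu^\ell\|\ind{B}\|^2 = (1-c)n\cdot \mu^\ell,$$
so it suffices to prove $\mu\le (1-c)r + c\lambda_G$. For this, let $\mathbf{v}$ be a unit eigenvector of $T$ with eigenvalue $\pm\mu$; because $T$ maps everything into coordinates of $B$, $\mathbf{v}$ is itself supported on $B$ (the case $\mu=0$ being trivial). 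Decompose $\mathbf{v} = \alpha\mathbf{1}/\sqrt{n} + \mathbf{w}$ with $\mathbf{w}\perp \mathbf{1}$ and $\alpha^2+\|\mathbf{w}\|^2=1$. The support constraint together with Cauchy-Schwarz gives $\alpha\sqrt{n} = \langle\mathbf{v},\ind{B}\rangle \le \sqrt{|B|}$, so $\alpha^2\le 1-c$. Using $M\mathbf{1}=r\mathbf{1}$, the orthogonality of $\mathbf{w}$ and $\mathbf{1}$, and the eigenvalue hypothesis $|\mathbf{w}^T M\mathbf{w}|\le \lambda_G\|\mathbf{w}\|^2$,
$$\mu = |\mathbf{v}^T M\mathbf{v}| = |\alpha^2 r + \mathbf{w}^T M\mathbf{w}| \le \alpha^2 r + \lambda_G(1-\alpha^2).$$
Since $r>\lambda_G$ this is increasing in $\alpha^2$, and substituting the maximal value $\alpha^2 = 1-c$ yields $\mu\le(1-c)r + c\lambda_G$, completing the proof.

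The one substantive step is the eigenvalue bound on $T$, and it hinges on the simple but essential observation that any unit vector supported on $B$ has inner product at most $\sqrt{|B|/n} = \sqrt{1-c}$ with the top eigenvector $\mathbf{1}/\sqrt{n}$ of $M$; once that is in hand, the rest is routine linear algebra and the spectral decomposition of $M$.
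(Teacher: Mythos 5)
Your argument is correct. Note that the paper does not prove this statement at all --- it is quoted verbatim from Ajtai, Koml\'os and Szemer\'edi \cite{AKS} and used as a black box --- so there is nothing internal to compare against; but your proof is the standard spectral one (essentially the original AKS/Alon--Chung argument): identify the walk count with $\ind{B}^T(P_BMP_B)^\ell\ind{B}$, and bound the spectral radius of $P_BMP_B$ by decomposing an eigenvector supported on $B$ along $\mathbf{1}$ and its orthogonal complement, using $\alpha^2\le 1-c$. All the steps check out (the reduction $\mu=|\mathbf{v}^TM\mathbf{v}|$ via $P_B\mathbf{v}=\mathbf{v}$, the vanishing cross term since $M\mathbf{1}=r\mathbf{1}$ and $\mathbf{w}\perp\mathbf{1}$, and the monotonicity in $\alpha^2$); the only degenerate case, $\lambda_G\ge r$, makes the claimed bound weaker than the trivial count $(1-c)nr^\ell$ of walks starting in $B$, so it costs nothing.
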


The set $A$ of Theorem \ref{thm:aks} is  fixed. In our case we choose a point $x_{k + 1}$ uniformly at random from $\PP(X_1(t))$, so we consider a simple random walk initiated at a uniformly random vertex $u \in X_1(t)$. The subsequent walk  now begins at vertex $u$ and continues until it hits a vertex of $Y_u=\ol X(t)\setminus\{u\}$. Because the vertex $u$ is random, the set $Y_u$ differs for each possible exit vertex $u \in X_1(t)$. To apply Theorem \ref{thm:aks}, we  split $X_1(t)$ into two disjoint sets $A, A'$ of (almost) equal size. For  $u \in A$, instead of considering the number of steps needed to hit $Y_u$, we can upper bound this by the number of steps needed to hit $B' = A'\cup X_2\cup\dots\cup X_r$, and vice versa. Suppose without loss of generality that $u\in A$.

Let $S(\ell)$ be a simple random walk of length $\ell$ starting from a uniformly chosen vertex of $A$. Thus $S(\ell)$ could be any of $|A| r^\ell$ uniformly chosen random walks. Let $c = |B'| / n$. The probability $p_\ell$ that a randomly chosen walk of length $\ell$ starting from $A$ has avoided $B'$ is, by Theorem \ref{thm:aks}, at most
$$
p_\ell \leq \frac{1}{(|X_1(t)|/2)r^\ell}(1-c)n(r(1-c) + c\l_G)^\ell \leq \frac{2(1-c)n}{|X_1(t)|}((1-c) + c\l)^\ell,
$$
where $\l\leq.99$ (see Lemma \ref{lem:Grproperties}) is the absolute value of the second largest eigenvalue of the transition matrix of $S$. Thus
\beq{AHC}{
\Ex{A}{H(C)} \leq \sum_{\ell\geq 1} p_\ell \leq \frac{2(1-c)n}{|X_1(t)|} \frac{1}{c(1-\l)}.
}
So,
\begin{equation}\label{eq:aksincrement}
\E{C(t+1) - C(t) \mid x_{2k}\in \PP(X_1(t))} = O\bfrac{(n - |B'|)n}{|X_1||B'|}.
\end{equation}
Now, for any $t$ we have $r^{-1}(rn-2t) \leq |B'| \leq rn-2t$, so summing over $0\leq t \leq t_1$, \eqref{eq:conditionincrement} gives $\E{C(t_1)} = o(n\log n)$.

\subsection{Phase two: Proof of Lemma \ref{lem:phasetwothree}}\label{sec:phasetwo}

\begin{lemma}
Let $\e > 0$. For $t_1 \leq t \leq t_4$,
$$
\E{C(t + 1) - C(t)} = \left(\frac{r}{r-2} \pm \e\right) \frac{n}{rn-2t}.
$$
\end{lemma}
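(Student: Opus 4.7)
The plan is to condition on $W(t)$ and rewrite $C(t+1) - C(t)$ as a hitting time. Let $u$ be the walker's position immediately after traversing the $t$-th edge. If $u \in \ol X(t)$ then the very next step produces the $(t+1)$-st edge, giving $C(t+1) - C(t) = 1$. Otherwise every vertex visited before $\ol X(t)$ is reached has all $r$ of its configuration points exposed, so the biased walk coincides with a simple random walk on $G_r$ until $\ol X(t)$ is hit, after which one final step produces the new edge. Writing $H(S)$ for the simple-random-walk hitting time of $S$, both cases yield $C(t+1) - C(t) = H(\ol X(t)) + 1$ when the walk is started at $u$, so the task reduces to estimating $\Ex{u}{H(\ol X(t))}$ and averaging over the distribution of $u$.

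The main estimate $\Ex{u}{H(\ol X(t))} \sim \frac{r}{r-2} \cdot \frac{n}{rn-2t}$ will follow by chaining together the tools developed earlier. Lemma \ref{lem:Grproperties} supplies the spectral gap $1-\lambda \geq 0.01$ and the bound of $\om r^\om$ short cycles. Lemma \ref{lem:concentration} yields, w.h.p., the size estimates $|X_1^g(t)| = rn\d(1-o(1))$, $|Z(t)| = O(n\d^{3/2})$, and $\F(t) \geq n\d^{1/2}$; in particular $|\ol X(t)| = (rn-2t)(1+o(1))$. Feeding these into Lemma \ref{lem:sprinkling} upgrades them to the statement that w.h.p.\ $\ol X(t)$ is a root set of order $\om$. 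Lemma \ref{roothitting} then delivers the stationary hitting time $\Ex{\pi}{H(\ol X(t))} \sim \frac{r}{r-2} \cdot \frac{n}{|\ol X(t)|}$, which after substituting the size estimate matches the target. The narrow strip $t_3 < t \leq t_4$, on which Lemma \ref{lem:concentration}(ii) is not directly stated, needs a small separate verification that $\F(t)$ is still large enough for the sprinkling argument to run.

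Finally, two corrections remain. To replace $\pi$ by the specific starting vertex $u$, the mixing estimate used in \eqref{eq:js} (exponential decay of $|P_u^{(s)}(v) - \pi_v|$ with rate $\lambda \leq 0.99$) gives $|\Ex{u}{H(S)} - \Ex{\pi}{H(S)}| = O(\log n)$ uniformly in $u$, which is dominated by the main term since $t \leq t_4$ forces $n/(rn-2t) \geq n/(r\log n) \gg \log n$. The $o(1)$-probability bad event on which any structural input fails is handled using the truncation $T^*, C^*$ of Section \ref{sec:Grproperties}: on the good spectral-gap event the crude bound \eqref{eq:js} gives $\Ex{u}{H(\ol X(t))} = O(n/|\ol X(t)|) = O(n/\log n)$, and multiplying by a sufficiently small failure probability (delivered by the concentration arguments in Sections \ref{sec:setsizes}--\ref{sec:greenedges}) absorbs its contribution into the $\pm \e$ slack. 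Making this accounting quantitatively match the $\pm \e$ tolerance across the full range $t_1 \leq t \leq t_4$---rather than merely $o(n\log n)$ in total---is the main obstacle I would expect to confront.
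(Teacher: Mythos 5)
Your decomposition and toolkit match the paper's proof closely: condition on $W(t)$, reduce $C(t+1)-C(t)$ to a hitting time of $\ol X(t)$, invoke Lemma~\ref{lem:concentration} for set sizes and Lemma~\ref{lem:sprinkling} to upgrade $\ol X(t)$ to a root set, apply Lemma~\ref{roothitting} for the main term, use the mixing bound for the $O(\log n)$ correction, and use \eqref{eq:js} plus the $o(1)$ failure probability to absorb the bad event into the $\pm\e$ slack. The paper packages this as events $\AA(t) = \{|X_1^g(t)-(rn-2t)| \leq (rn-2t)/\om\}$ and $\BB(t)=\{\ol X(t)\text{ is a root set}\}$, with $\E{C(t+1)-C(t)}$ split into a term on $\EE(t)=\AA(t)\cap\BB(t)$ and an $O\left(\frac{n}{rn-2t}\Prob{\ol{\EE(t)}}\right)$ term, exactly as you describe; the accounting you worry about at the end is handled by the fact that $\Prob{\ol{\EE(t)}}=o(1)$ multiplied by the crude $O(n/(rn-2t))$ bound is already dominated by the $\e$ slack, with no further delicacy needed.

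The one place where your plan diverges from the paper, and would likely get stuck, is the strip $t_3 < t \leq t_4$. You propose to ``verify that $\F(t)$ is still large enough for the sprinkling argument,'' but the concentration statement for $\F(t)$ (Lemma~\ref{lem:concentration}(ii), ultimately Lemma~\ref{lem:greenedges}) is only proved up to $t_3$, and $\F(t)$ is \emph{not} controlled from below in $(t_3, t_4]$; indeed $\d_\e > \d_4$, so Lemma~\ref{lem:greenedges} stops well before $t_4$. The paper sidesteps this by running the sprinkling computation one time only, at $t_3$: there, $L(t_3)^2/\F(t_3)=O(n^2\d_3^2/(n\d_3^{1/2}))=o(1)$, which is much stronger than the root-set requirement --- it implies that w.h.p.\ \emph{no two} links in $X_1^g(t_3)$ lie within distance $\om$ of each other. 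Combined with $Z(t_3)=O(n\d_3^{3/2})=o(1)$ giving $Z(t_3)=\emptyset$ w.h.p.\ (hence $Z(t)=\emptyset$ and $X_1^g(t)\subseteq X_1^g(t_3)$ for $t>t_3$), the property inherits to all $t\in(t_3,t_4]$ without ever estimating $\F(t)$ there; only the short-cycle count $\om r^\om \leq (rn-2t)/\om^3$ remains to check. You should replace your proposed verification with this monotonicity-from-$t_3$ argument.

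Two cosmetic remarks: $C(t+1)-C(t)$ is never exactly $H(\ol X(t))+1$ under the paper's step-counting convention (steps come in vertex-move/matching-move pairs, so even the trivial case gives an increment of $2$), but since the main term is $\Theta(n/(rn-2t))\gg 1$ this additive constant is irrelevant. Also, the intermediate simple random walk lives on $G(t)$ (equivalently the contracted graph $G^*(t)$), not on $G_r$; as you note this coincides with the $G_r$-walk on the interior of $V\setminus\ol X(t)$, so the distinction is harmless, but it matters when invoking the eigenvalue bound, which the paper secures via $\l(G^*(t))\leq\l(G_r)\leq 0.99$.
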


\begin{proof}
The proof of Lemma \ref{lem:phasetwothree} is based on the following calculation. Define events
\begin{align}
\AA(t) & = \left\{|X_1^g(t) - (rn-2t)| \leq \frac{rn-2t}{\om}\right\}, \\
\BB(t) & = \{\ol X(t) \text{ is a root set of order } \om\},
\end{align}
and set $\EE(t) = \AA(t) \cap \BB(t)$. Then for any $\e > 0$, $\E{C(t + 1) - C(t)}$ can be calculated as
$$
\left(\frac{r}{r-2} \pm \e\right)\frac{n}{rn-2t}\Prob{\EE(t)} + O\left(\frac{n}{rn-2t} \Prob{\ol{\EE(t)}}\right) + O(\log n).
$$
Indeed, suppose $\EE(t)$ holds. As $X_1(t)$ contains almost all unvisited configuration points, edge $t$ is attached to some $v\in X_1(t)$ w.h.p., and a simple random walk commences at $v$, ending once it hits $\ol X \setminus\{v\}$. As the vertices of $\ol X$ are spread far apart, it is unlikely that this happens within $O(\log n)$ steps. After a logarithmic number of steps, the random walk has mixed to within $\e$ of the stationary distribution $\pi$ in total variation. Lemma \ref{roothitting} shows that after this point, the expected time taken to hit $\ol X$ is $(r/(r-2)\pm \e) n/|\ol X|$, and as $\AA(t)$ holds we have $|\ol X| \sim (rn-2t)$. If $\EE(t)$ does not hold, then we use the bound \eqref{eq:js}, stating that the hitting time is $O(n/|\ol X|) = O(n/(rn-2t))$ (as $|\ol X| \geq (rn-2t)/r$) as long as the graph has a positive eigenvalue gap. We refer to the discussion in Section \ref{sec:Grproperties} justifying our assumption that the second largest eigenvalue stays at most $0.99$ throughout the process.

Lemmas \ref{lem:concentration} and \ref{lem:sprinkling} show that indeed, $\Prob{\EE(t)} = 1-o(1)$ for $t_1 \leq t \leq t_3$. It remains to show that $\Prob{\EE(t)} = 1-o(1)$ for $t_3\leq t\leq t_4$. Fix $t > t_3$. As $Z(t_3) = O(n\d_3^{3/2}) = o(1)$, we have $Z(t) \subseteq Z(t_3) = \emptyset$ w.h.p. by Markov's inequality. Note that this implies $X_1^g(t) \subseteq X_1^g(t_3)$ and $|X_1^g(t)| = rn-2t$. We also have
$$
\frac{L(t_3)^2}{\F(t_3)} = O\bfrac{n^2\d_3^2}{n\d_3^{1/2}} = O\left(n\left(n^{-3/4}\right)^{3/2}\right) = o(1).
$$
Thus, repeating the calculation \eqref{eq:linkunionbound} of Lemma \ref{lem:sprinkling}, we have that no two vertices of $X_1^g(t_3)$ are placed within distance $\om$ of each other. As $X_1^g(t) \subseteq X_1^g(t_3)$, the same must be true of $X_1^g(t)$. Thus the only vertices of $X_1^g(t)$ which violate the root set constraints are those placed on the $\om r^\om$ short cycles of $G_r$, and choosing $\om$ small enough we have $\om r^\om \leq (rn-2t)/\om^3$ for all $t_3\leq t\leq t_4$, so w.h.p. $\ol X(t) = X_1^g(t)$ is a root set of order $\om$.
\end{proof}

\section{Set sizes}\label{sec:setsizes}

Recall the definition
$$
Z(t) = X_1^b(t) \cup \bigcup_{i=2}^r X_i(t),
$$
where $X_i$ denotes the set of vertices incident to $i$ unvisited edges, and $X_1^b$ is the set of vertices in $X_1$ which are incident to at least one edge which has been visited more than once.
\begin{lemma}\label{lem:Ztbound}
There exists a constant $B > 0$ such that for $t \geq t_0$ and $0 < \th = o(1)$,
$$
\E{e^{\th Z(t)}} \leq \exp\left\{\th Bn\d^{3/2}\right\}.
$$
\end{lemma}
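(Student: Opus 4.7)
The plan is to first establish a first-moment bound $\E{Z(t)} = O(n\d^{3/2})$ and then upgrade it to the stated MGF bound using the negative-correlation / Pólya-urn structure provided by Lemma \ref{lem:uniform}.

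Decomposing $Z(t) = |X_1^b(t)| + |X_{\geq 2}(t)|$ with $X_{\geq 2} = \bigcup_{i\geq 2}X_i$, the contribution $\E{|X_{\geq 2}|}$ is $O(n\d^2) = o(n\d^{3/2})$ by a configuration-model exchangeability argument: for each $i\geq 2$, $\Prob{v \in X_i} \leq \binom{r}{i}\d^i(1+o(1))$. The bulk of the work is to show $\E{|X_1^b|} = O(n\d^{3/2})$. The key observation is that since a vertex's red-edge count is monotonically decreasing, any second traversal of an edge at $v$ that leaves $v \in X_1$ at time $t$ must come from a stuck (simple-random-walk) phase of the biased walk \emph{terminating} at $v$ (a stuck walk originating at $v$ would require $v \in X_0$ at that time, contradicting $v \in X_1$ at time $t$). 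Writing $E_v(t)$ for the number of stuck walks ending at $v$ by time $t$, we obtain $|X_1^b(t)| \leq \sum_v E_v(t)\ind{v \in X_1(t)}$. By Lemma \ref{lem:blueend} applied with $R \subseteq X_0$ and $S = \ol X \cup \{y\}$, the terminal vertex of each stuck walk is spread essentially uniformly over $\ol X$, so contributions to $\E{|X_1^b|}$ can be bounded by summing $K(t)\cdot |X_1(t)|/|\ol X(t)|$-style expressions over the $K(t)$ stuck phases; the promised $\d^{3/2}$ scaling arises from combining the stuck-walk count $K(t)$ with the $|X_1|\sim rn\d$ estimate and an extra $\sqrt{\d}$-factor saved from the rarity of $v$ remaining in $X_1$ (rather than passing into $X_0$) between the moment its first blue edge is created and time $t$.

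For the exponential moment, I would condition on the equivalence class $[W(t)]$ and invoke Lemma \ref{lem:uniform}: the links are sprinkled uniformly into the green edges of the contracted walk, which induces a negative correlation between the indicators $\ind{v \in Z(t)}$, giving
\[
\E{e^{\th Z(t)} \mid [W(t)]} \leq \prod_v \bigl(1 + \th\,\Prob{v \in Z(t)\mid [W(t)]}\bigr) \leq \exp\bigl(\th\,\E{Z(t)\mid [W(t)]}\bigr).
\]
Taking total expectation and combining with the first-moment bound yields $\E{e^{\th Z(t)}} \leq \exp(\th B n\d^{3/2})$ for a suitable constant $B$.

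The main obstacle is the sharp $\d^{3/2}$ rate for $\E{|X_1^b|}$. The naive bound $|X_1^b| \leq 2\cdot|\text{blue edges}| \leq 2(C(t)-2t)$ only yields $O(n\log(1/\d))$, far too weak. Extracting $\d^{3/2}$ requires careful accounting of \emph{when and where} stuck walks create blue edges at vertices of the sparse set $X_1 \subseteq \ol X$, combining the spread-out terminal distribution of Lemma \ref{lem:blueend} with the observation that most stuck-walk endings quickly "escape" $X_1$ by exhausting the last red edge. A secondary technical point is that a direct Doob-martingale / Azuma-Hoeffding argument on the walk steps is far too lossy (the walk has length $\Theta(n\log(1/\d)) \gg n\d^{3/2}$), so the sprinkling-based negative-correlation route appears essential for the stated MGF inequality.
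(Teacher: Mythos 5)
Your proposal diverges from the paper in two ways that both leave real gaps.

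First, the decomposition $Z(t) = X_1^b(t) \cup X_{\geq 2}(t)$ and the dismissal of $X_{\geq 2}$ as the ``easy'' part via $\Prob{v\in X_i} \leq \binom{r}{i}\d^i$ is not sound. That heuristic treats the $r$ configuration points of $v$ as if each is independently unvisited with probability $\approx\d$, but the biased-walk dynamics give a fundamentally different decay rate: surviving with $b$ red points from time $s$ to $t$ costs roughly $\bfrac{rn-2t}{rn-2s}^{b/2}$, i.e.\ an exponent of $b/2$, not $b$ (this is precisely why $\E{X_r(t)} \sim n\d^{r/2}$ in Lemma \ref{lem:Xr}, not $n\d^r$). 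Applied to $X_2$, the naive argument gives $\d$-type behaviour, not $\d^2$. The structural reason $X_2(t)$ is small is the \emph{same} reason $X_1^b(t)$ is small: since $r$ is odd, $v\in X_2(t)$ requires at least one visit that consumes a single configuration point, i.e.\ a stuck-walk phase terminating at $v$, and it is exactly these $1$-consumption events that supply the fractional exponent. You cannot therefore quarantine the stuck-walk accounting to $X_1^b$; it is what controls all of $Z(t)$. The paper handles this uniformly by partitioning $Z(t)$ according to the full history vector $\ell\in\{1,2\}^k$ of consumption sizes (excluding $(2,\dots,2)$, which is $X_1^g$), deriving a bound of the shape $\Prob{1\in Z_\ell(t)}\leq \text{const}\cdot \d^{r/2-k+N}$ where $N$ counts the $1$-entries, and showing $r/2-k+N\geq 3/2$ in every case by parity of $r$. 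Your $X_1^b$ sketch hits the right theme but is far from a proof, and the $X_{\geq 2}$ part is simply wrong as stated. (As a side remark on applying Lemma \ref{lem:blueend}: it requires $R\subseteq S$, and the paper uses $R=X_1(t)$, $S=\ol X(t)$; taking $R\subseteq X_0$ does not satisfy the hypothesis.)

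Second, the jump from a first-moment bound to the MGF bound via
\[
\E{e^{\th Z(t)}\mid [W(t)]}\leq \prod_v\bigl(1+\th\,\Prob{v\in Z(t)\mid [W(t)]}\bigr)
\]
requires negative association of the indicators $\ind{v\in Z(t)}$, which you assert but do not prove, and which is dubious: stuck-walk endings cluster near the current location of the walk, so membership in $Z$ at nearby vertices is, if anything, positively correlated. The paper does not need any correlation structure. It proves directly that $\Prob{[m]\subseteq Z(t)}\leq (C\d)^{3m/2}$ for every $m$ (the same history-tracking calculation over $m$ disjoint vertices factorizes after relaxing the time-ordering constraints), giving $\E{(Z(t))_m}\leq (Cn\d^{3/2})^m$, and then obtains the MGF bound from the binomial identity $\E{s^{Z(t)}}=\sum_{m\geq 0}\E{(Z(t))_m}(s-1)^m/m!$ with $s=e^\th$. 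You would need to replace the negative-association step with a factorial-moment argument of this kind to close the proof.
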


\begin{proof}
We show that there exists a $B > 0$ such that for any $m\geq 1$,
$$
\Prob{[m] \subseteq Z(t)} \leq (B\d)^{3m/2},
$$
beginning with $m = 1$ before the general statement. Let $\LL = \LL(r)$ denote the set of vectors $(\ell_1,\ell_2,\dots,\ell_k)$ with $\ell_i \in \{1,2\}$ such that $\sum \ell_i \leq r-1$, including in $\LL$ the empty vector $\emptyset$, excluding the vector $(2,2,\dots,2)$ consisting of $(r-1)/2$ copies of $2$ (which corresponds to $X_1^g$, as we will see). The vector $\ell = (\ell_1,\dots,\ell_k)$ counts the number of new configuration points of a vertex $v$ that are used the first $k$ times $v$ is visited. We partition
$$
Z(t) = \bigcup_{\ell\in \LL} Z_\ell(t),
$$
where $v \in Z_\ell(t)$ for $\ell = (\ell_1,\dots,\ell_k)$ if and only if there exists a sequence $0 < s_1 < s_2 < \dots < s_k \leq t$ such that $v$ moves from $X_{r-\ell_1-\dots-\ell_{j-1}}$ to $X_{r-\ell_1-\dots-\ell_j}$ at time $s_j$ for $j=1,\dots,k$, and is in $X_{r-\ell_1-\dots-\ell_k}$ at time $t$. If $v\in X_i$ at time $s$, the probability that $v$ is chosen by random assignment is $i /(rn-2s)$, while Lemma \ref{lem:blueend} shows that the probability that $v$ is at the end of a blue walk is $O(1 / (rn-2s))$. In either case, the probability that $v$ moves from one set to another is at most $B/(rn-2s)$ for some $B > 0$. For a fixed $\ell = (\ell_1,\dots,\ell_k)\in \LL$, with $s_0 = 1$,
\begin{align}
\Prob{1\in Z_\ell(t)} \leq & \sum_{s_1 < \dots < s_k}\prod_{j=1}^{k} \left[\prod_{s = s_{j-1} + 1}^{s_j - 1} \left(1 - \frac{r - (\ell_1 + \dots + \ell_{j-1})}{rn-2s}\right) \frac{B}{rn-2s_j}\right] \\
& \times \prod_{s = s_k + 1}^t \left(1 - \frac{r - (\ell_1 + \dots + \ell_k)}{rn-2s}\right). \label{eq:1Zt}
\end{align}
For $b\geq 1$ we use the bound
\begin{align}
\prod_{s = t_0}^{t} \left(1 - \frac{b}{rn-2s}\right)
& \leq \exp\left\{-\frac{b}{2} \sum_{s = t_0}^t \frac{1}{\frac{rn}{2} - s}\right\} \\
& \leq \exp\left\{-\frac{b}{2} \int_{t_0}^{t} \frac{dx}{\frac{rn}{2} - x}\right\} \\
& = \bfrac{rn-2t}{rn-2t_0}^{b/2}. \label{eq:exbound}
\end{align}
Combining \eqref{eq:1Zt} and \eqref{eq:exbound}, the probability that $1\in Z_\ell(t)$ is bounded above by
\begin{align}\label{eq:1Ztb}
\sum_{s_1 < \dots < s_k} \left[\prod_{j=1}^k \frac{B}{rn-2s_j}\bfrac{rn-2s_j}{rn-2s_{j-1}}^{(r-(\ell_1 + \dots + \ell_{j-1}))/2}\right] \bfrac{rn-2t}{rn-2s_k}^{(r-(\ell_1 + \dots + \ell_k))/2}.
\end{align}
Collecting powers of $rn-2s_j$ for $j=1,\dots,k$, we have
$$
\Prob{1\in Z_\ell(t)} \leq B^k \frac{(rn-2t)^{(r - (\ell_1 + \dots + \ell_k)) / 2}}{(rn)^{r/2}} \sum_{s_1 < \dots < s_k} \prod_{j=1}^k (rn-2s_j)^{\ell_j/2 - 1}.
$$
Let $N$ denote the number of indices $j\in\{1,\dots,k\}$ with $\ell_j = 1$. Then
$$
\sum_{s_1 < \dots < s_k} \prod_{j=1}^k (rn-2s_j)^{\ell_j/2 - 1} \leq \prod_{j=1}^k \left(\sum_{s = 0}^t (rn-2s_j)^{\ell_j/2-1}\right) \leq n^{k - N} (rn-2t)^{N/2},
$$
which implies that
$$
\Prob{1\in Z_\ell(t)} \leq \frac{B^k}{r^{r/2}} (rn-2t)^{(r + N - (\ell_1 + \dots + \ell_k)) / 2} n^{k-N-r/2}.
$$
As $\ell_1 + \dots + \ell_k = 2k - N$, we have $(r + N - (\ell_1 + \dots + \ell_k)) / 2 = r/2 - k + N$. So
$$
\Prob{1\in Z_\ell(t)} \leq \frac{B^k}{r^{k-N}} \d^{r/2 -k + N}.
$$
We now argue that $r/2-k + N\geq 3/2$, or equivalently $2(k-N) \leq r-3$, for all $\ell\in \LL$. Firstly, if $\ell_1 + \dots + \ell_k \leq r-3$ then we have $2(k-N) \leq 2k-N = \ell_1 + \dots + \ell_k \leq r-3$. Secondly, if $\ell_1 + \dots + \ell_k = r-2$ then as $r-2$ is odd we have $N \geq 1$, so $2(k-N) \leq 2k-N - 1 \leq r-3$. Finally, if $\ell_1 + \dots + \ell_k = r-1$ then (as $(2,2,\dots,2)\notin \LL$) we have $N\geq 2$, so $2(k-N) \leq 2k-N-2 \leq r-3$.

As $|\LL(r)|$ is a function of $r$ only, and therefore constant with respect to $n$, it follows that
$$
\Prob{1\in Z(t)} = \sum_{\ell\in\LL(r)} \Prob{1\in Z_\ell(t)} = O(\d^{3/2}).
$$

We turn to bounding the probability that $[m]\subseteq Z(t)$. We fix $\ell^{(1)},\dots,\ell^{(m)} \in \LL$ and bound the probability that $i \in Z_{\ell^{(i)}}(t)$ for $i=1,\dots,m$. Let $k(i) = \dim \ell^{(i)}$ denote the number of components of $\ell^{(i)}$. Then, summing over all choices $s_j^{(i)}$ for $1\leq i\leq m$ and $1\leq j\leq k(i)$,
\begin{align}
 & \Prob{i\in Z_{\ell^{(i)}}(t), i = 1,\dots,m} \\
& \leq \sum_{s_j^{(i)}}\prod_{i=1}^m B^{k(i)}\frac{(rn-2t)^{(r-\sum_j \ell_j^{(i)})/2}}{(rn)^{r/2}}\prod_{j=1}^{k(i)} (rn-2s_j^{(i)})^{\ell^{(i)}_j / 2 - 1} \\
& \leq \prod_{i=1}^m \left[B^{k(i)}\frac{(rn-2t)^{(r-\sum_j \ell_j^{(i)})/2}}{(rn)^{r/2}}\prod_{j=1}^{k(i)} \left(\sum_{s=0}^t (rn-2s)^{\ell_j^{(i)}/2 - 1}\right)\right] \\
& \leq B^{\sum k(i)} \d^{3m/2} = O((B^r\d)^{3m/2}).
\end{align}
Summing over all $O(m)$ choices of $\ell^{(i)}, i = 1,\dots,m$, we have
$$
\Prob{[m] \subseteq Z(t)} = O(m(B^r\d)^{3m/2}) \leq (C\d)^{3m/2}
$$
for some constant $C > 0$. By symmetry the same bound holds for any vertex set of size $m$. It follows that for any $m$, writing $(n)_m = n(n-1)\dots(n-m+1)$,
$$
\E{(Z(t))_m} \leq (n)_m \times (C\d)^{3m/2} \leq (Cn\d^{3/2})^m.
$$
For $s > 1$ we apply the binomial theorem to obtain
$$
\E{s^{Z(t)}} = \E{(1 + (s-1))^{Z(t)}} = \sum_{m\geq 0} \frac{\E{(Z(t))_m}(s-1)^m}{m!}.
$$
We set $s = e^\th \leq 1 + 2\th$ (as $\th = o(1)$) to obtain
$$
\E{e^{\th Z(t)}} \leq \sum_{m\geq 0} \frac{(Cn\d^{3/2})^m (2\th)^m}{m!} \leq \exp\left\{\th D n\d^{3/2}\right\},
$$
for some $D > 0$.
\end{proof}

\begin{corollary}\label{cor:X1g}
For $t = (1 - \d)\frac{rn}{2}$ with $\d = o(1)$, and $0 < \th = o(1)$,
$$
\E{e^{-\th X_1^g(t)}} = \exp\left\{-\th rn\d(1 - o(1))\right\}
$$
\end{corollary}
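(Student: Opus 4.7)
The plan is to deduce the corollary from Lemma \ref{lem:Ztbound} by exploiting a deterministic sandwich relating $X_1^g(t)$ to $Z(t)$ through the conservation of unvisited configuration points.

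First I would record the exact identity obtained by summing, over all vertices, the number of unvisited configuration points incident to that vertex:
\[
X_1^g(t) + X_1^b(t) + \sum_{i=2}^{r} i\, X_i(t) \;=\; rn - 2t + 1,
\]
since each vertex in $X_i$ (or $X_1^g, X_1^b$ when $i=1$) contributes exactly $i$. Bounding each coefficient on the left by $r$ and recalling $Z(t) = X_1^b(t) + \sum_{i \ge 2} X_i(t)$ produces the deterministic pointwise sandwich
\[
rn - 2t + 1 - r\, Z(t) \;\le\; X_1^g(t) \;\le\; rn - 2t + 1.
\]

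Next I would multiply by $-\th < 0$, exponentiate (which reverses the inequalities), and take expectations. The right-hand side becomes $e^{-\th(rn\d + 1)}\, \E{e^{\th r Z(t)}}$; since $\th r = o(1)$, Lemma \ref{lem:Ztbound} applied with parameter $\th r$ gives $\E{e^{\th r Z(t)}} \le \exp(\th r B n \d^{3/2})$. Substituting and using $\d = o(1)$, this simplifies to $\exp(-\th rn\d(1 - O(\d^{1/2})))$, absorbing the harmless factor $e^{-\th} = 1+o(1)$. The left-hand side of the sandwich directly gives the matching lower bound $e^{-\th(rn\d+1)}$. Together these pin $\E{e^{-\th X_1^g(t)}}$ to $\exp(-\th rn\d(1-o(1)))$.

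There is essentially no serious obstacle — the corollary is a one-line consequence of the sandwich combined with the previous lemma's MGF bound. The only small verification is that the additive $\th$ coming from the ``$+1$'' in $rn-2t+1$ is negligible compared to $\th rn\d$; this holds in the regime in which the corollary is used ($t \ge t_0$, so $n\d \to \infty$), and the relative error $B\d^{1/2}$ from Lemma \ref{lem:Ztbound} is absorbed into $1-o(1)$ because $\d = o(1)$.
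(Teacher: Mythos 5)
Your proposal is correct and follows essentially the same route as the paper: both use the accounting identity $\sum_{i} i X_i(t) = rn - 2t + O(1)$, rearrange it to sandwich $X_1^g(t)$ between $rn-2t+1 - rZ(t)$ and $rn-2t+1$, and invoke Lemma~\ref{lem:Ztbound} with parameter $r\th$. Your version is slightly more explicit than the paper's (the paper records only the upper bound on $\E{e^{-\th X_1^g}}$, leaving the trivial matching lower bound implicit), but there is no substantive difference.
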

\begin{proof}
The number of free configuration points at time $t$ is $rn-2t$, so
$$
rn-2t = \sum_{i=1}^r iX_i(t) \leq X_1^g(t) + rZ(t).
$$
By Lemma \ref{lem:Ztbound} we have
$$
\E{e^{-\th X_1^g(t)}} \leq e^{-\th(rn-2t)}\E{e^{r\th Z(t)}} = \exp\left\{-\th rn\d(1 -o(1))\right\}.
$$
\end{proof}

The technique used to prove Lemma \ref{lem:Ztbound} can be strengthened to obtain concentration for the number of unvisited vertices $X_r(t)$.
\begin{lemma}\label{lem:Xr}
For $\th > 0$,
\begin{equation}\label{eq:Xrmgf}
\E{e^{\th X_r(t)}} \leq \exp\left\{2\th n\d^{r/2}\right\}.
\end{equation}
Furthermore, if $t = (1-\d)\frac{rn}{2}$ with $\d = o(1)$ and $n\d^{r/2}\to\infty$, then for any $\om$ tending to infinity arbitrarily slowly,
$$
\Prob{|X_r(t) - n\d^{r/2}| > \frac{n\d^{r/2}}{\om^{1/2}}} \leq \frac{1}{\om}.
$$
Finally, if $n\d^{r/2} = o(1)$ then $X_r(t) = 0$ w.h.p.
\end{lemma}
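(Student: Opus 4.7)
The plan is to follow the factorial-moment-generating-function template of Lemma~\ref{lem:Ztbound}, exploiting the substantial simplification that $v \in X_r(t)$ iff \emph{none} of the $r$ configuration points in $\PP(v)$ has ever been exposed; in particular there is no sum over vectors $\ell \in \LL(r)$ and no need to invoke Lemma~\ref{lem:blueend}, since an unvisited vertex can only leave $X_r$ through a random matching step (the within-vertex move at an even step keeps the walk at a vertex it has already visited).

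First I would establish the joint upper bound $\Prob{S \subseteq X_r(t)} \leq \d^{mr/2}$ for every $S \subseteq [n]$ of size $m$. Conditional on $S \subseteq X_r(s)$ all $mr$ points of $\PP(S)$ are free, so the next matching step hits $\PP(S)$ with probability $mr/(rn - 2s + O(1))$. Chaining across $s$ and applying the integral estimate \eqref{eq:exbound},
$$
\Prob{S \subseteq X_r(t)} \leq \prod_{s = 0}^{t - 1}\left(1 - \frac{mr}{rn - 2s}\right) \leq \bfrac{rn - 2t}{rn}^{mr/2} = \d^{mr/2},
$$
so $\E{(X_r(t))_m} \leq (n)_m \d^{mr/2} \leq (n \d^{r/2})^m$. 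The binomial identity $\E{e^{\th X_r(t)}} = \sum_{m \geq 0} \E{(X_r(t))_m}(e^\th - 1)^m/m!$ then yields $\E{e^{\th X_r(t)}} \leq \exp\{(e^\th - 1) n \d^{r/2}\}$, from which \eqref{eq:Xrmgf} follows via $e^\th - 1 \leq 2\th$ for $\th > 0$ small, exactly as in Corollary~\ref{cor:X1g}. The third statement is immediate from Markov applied to the first moment: if $n\d^{r/2} = o(1)$ then $\E{X_r(t)} \leq n\d^{r/2} = o(1)$, so $X_r(t) = 0$ w.h.p.

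For the concentration I would also produce a matching \emph{lower} bound on the mean. The same telescoping, now with $\log(1 - x) \geq -x - x^2$ and $\sum_s (r/(rn - 2s))^2 = O(1/(rn - 2t))$, yields
$$
\Prob{v \in X_r(t)} \geq \d^{r/2}\exp\left\{-O\bfrac{1}{rn - 2t}\right\} = \d^{r/2}(1 - o(1))
$$
on the range $\d \geq \d_0$, so $\E{X_r(t)} = n\d^{r/2}(1 \pm \eta_n)$ with $\eta_n = O(1/(n\d))$. Combined with the $m = 2$ case of the upper bound,
$$
\Var(X_r(t)) = \E{(X_r(t))_2} + \E{X_r(t)} - \E{X_r(t)}^2 \leq 2\eta_n (n\d^{r/2})^2 + n\d^{r/2},
$$
and Chebyshev's inequality applied with threshold $n\d^{r/2}/\om^{1/2}$ gives $\Prob{|X_r(t) - n\d^{r/2}| > n\d^{r/2}/\om^{1/2}} \leq 2\eta_n \om + \om/(n\d^{r/2}) + o(1)$, where the $o(1)$ absorbs the shift from $\E{X_r(t)}$ to $n\d^{r/2}$ via the triangle inequality.

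The main obstacle is calibrating $\om$ against the two error sources in the variance bound, the $\eta_n (n\d^{r/2})^2$ term from the mean gap and the Poisson-like $n\d^{r/2}$ term. Both are controllable when $n\d^{r/2} \to \infty$: since $\eta_n = O(1/(n\d))$ is a power of $n$ on the relevant range and $n\d^{r/2} \to \infty$ by hypothesis, any $\om$ growing slowly enough that $\eta_n \om^2 \to 0$ and $\om^2/(n\d^{r/2}) \to 0$ makes the displayed probability at most $1/\om$. All remaining pieces are routine adaptations of the telescoping arguments already developed for Lemmas~\ref{lem:Ztbound} and~\ref{lem:sprinkling}, so the overall proof is shorter than that of Lemma~\ref{lem:Ztbound} rather than harder.
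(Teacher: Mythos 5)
Your proposal is correct and follows essentially the same route as the paper: the factorial-moment bound $\Prob{[m]\subseteq X_r(t)}\leq\d^{rm/2}$ from the sequential matching probabilities, the binomial identity for the exponential moment, a reverse telescoping estimate for the $m=1$ lower bound on the mean via $\log(1-x)\geq -x-x^2$, Chebyshev for concentration, and Markov for the $n\d^{r/2}=o(1)$ case. Your additional bookkeeping of the error $\eta_n=O(1/(n\d))$ and the explicit calibration of $\om$ against the two variance terms is a slightly more careful version of the paper's ``$\Var{X_r(t)}=o(\E{X_r(t)}^2)$ plus $\om$ tending to infinity slowly enough,'' but it is not a different argument.
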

\begin{proof}
From \eqref{eq:exbound} we have for any $m$,
\begin{equation}\label{eq:mbound}
\Prob{[m] \subseteq X_r(t)} = \left(1 - \frac{m}{n}\right) \prod_{s=0}^t \left(1 - \frac{rm}{rn-2s-1}\right) \leq \d^{rm/2}.
\end{equation}
The inequality \eqref{eq:Xrmgf} follows from the arguments used in the proof of Lemma \ref{lem:Ztbound}.

For $m = 1$ we need the converse inequality to \eqref{eq:exbound}. From $\ln(1-x) = -x - x^2$ we have
\begin{align}
\prod_{s = 0}^t \left(1 - \frac{r}{rn-2s}\right) & = \exp\left\{-\sum_{s=0}^t\left[ \frac{r}{rn-2s} + \frac{r^2}{(rn-2s)^2}\right]\right\} \\
& \geq \exp\left\{-\int_{-1}^t \left[\frac{r}{rn-2x} + \frac{r^2}{(rn-2x)^2}\right]dx\right\} \\
& \geq \bfrac{rn-2t}{rn}^{r/2}(1 - o(1)).
\end{align}
Together with \eqref{eq:exbound} this shows that $\E{X_r(t)} = n\d^{r/2}(1 - o(1))$. From \eqref{eq:mbound} we have $\E{(X_r(t))_2} \leq n(n-1)\d^r$. We conclude that the leading terms of $\E{(X_r(t))_2}$ and $\E{X_r(t)}^2$ agree, and
$$
\Var{X_r(t)} = \E{(X_r(t))_2} + \E{X_r(t)} - \E{X_r(t)}^2 = o(\E{X_r(t)}^2).
$$
We apply Chebyshev's inequality with some $\om$ tending to infinity sufficiently slowly:
$$
\Prob{|X_r(t) - \E{X_r(t)}| \geq \frac{\E{X_r(t)}}{\om^{1/2}}} \leq \frac{\Var{X_r(t)} \om}{\E{X_r(t)}^2} = o(1).
$$
Finally, if $n\d^{r/2} = o(1)$ then $\E{X_r(t)} \leq n\d^{r/2} = o(1)$ and Markov's inequality shows that $|X_r(t)| = 0$ w.h.p.
\end{proof}

Lemma \ref{lem:Xr} relates the number of unvisited edges to the number of unvisited vertices: we expect $|X_r(t)| = n-s$ to occur when $t \sim \left(1 - \frac{s}{n}\right)^{2/r}$. This heuristically explains why $C_E^b(G_r) \sim \frac{r}{2} C_V^b(G_r)$. Detailed calculations for the vertex cover time are carried out for $r = 3$ in \cite{CFJ}, and the calculations for larger $r$ are identical.

\section{The green edges}\label{sec:greenedges}

Let $\F(t)$ denote the number of green edges in $W(t)$.
\begin{lemma}\label{lem:greenedges}
Let $0 < \e < r-2$ and define
$$
\d_\e = \bfrac{\log^4 n}{n}^{\frac{r-1}{r+\e}}, \quad t_\e = (1-\d_\e)\frac{rn}{2}.
$$
Then with high probability, $\F(t) \geq n\d^{\frac{1+\e}{r-1}}$ for all $t_1\leq t \leq t_\e$.
\end{lemma}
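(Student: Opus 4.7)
My plan is to prove Lemma \ref{lem:greenedges} by (a) lower bounding $\E{\F(t)}$, (b) concentrating $\F(t)$ around its mean via a variance or higher-moment argument, and (c) union-bounding over $t\in[t_1,t_\e]$. The approach parallels the proofs of Lemmas \ref{lem:Ztbound} and \ref{lem:Xr}.

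\textbf{Step 1 (expectation).} By the symmetry of the configuration model, $\E{\F(t)}=\frac{rn}{2}\Prob{p\text{ used exactly once}}$ for a fixed $p\in\PP$. I would decompose $\Prob{p\text{ used once}}$ by summing over the possible histories $\ell=(\ell_1,\ldots,\ell_k)$, with $\ell_i\in\{0,1,2\}$, of visits to the vertex $v\ni p$, analogous to the decomposition in Lemma \ref{lem:Ztbound}. Conditional on $v$ having history $\ell$, by symmetry over the $r$ points of $v$, the probability that $p$ is among the used-once points is a counting ratio. I expect the leading contributions to come from the $X_1^g$ case ($\ell=(2,\ldots,2)$ of length $(r-1)/2$, giving $\Theta(\d)$), from $X_0$-terminating histories like $\ell=(2,\ldots,2,1)$ of length $(r+1)/2$ (giving $\Theta(\d^{1/2})$), and from further histories that interpolate. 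A careful accounting should yield $\E{\F(t)}\geq c\,n\d^{(1+\e)/(r-1)}$ throughout $t\in[t_1,t_\e]$.

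\textbf{Step 2 (concentration).} For fixed $t$, I would use a higher-moment Chebyshev-style argument. Computing $\E{(\F(t))_m}$ via joint probabilities $\Prob{e_1,\ldots,e_m\text{ all green}}$ and using the approximate factorization of these (asymptotic independence of far-apart edges in $G_r$) gives $\E{(\F(t))_m}\leq (1+o(1))\E{\F(t)}^m$ for small $m$. Markov's inequality applied to $(\F(t)-\E{\F(t)})^{2m}$ then yields a tail bound of the form $\Prob{|\F(t)-\E{\F(t)}|>\E{\F(t)}/2}\leq n^{-\om(1)}$, provided $\E{\F(t)}$ grows at least as a polynomial in $\log n$.

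\textbf{Step 3 (union bound).} A union bound over the $O(n)$ integer values $t\in[t_1,t_\e]$ completes the proof. The threshold $\d_\e=(\log^4 n/n)^{(r-1)/(r+\e)}$ is tuned so that $n\d_\e^{(1+\e)/(r-1)}=n^{(r-1)/(r+\e)}\log^{4(1+\e)/(r+\e)} n$ grows as a positive power of $n$ times a polylog, ensuring the concentration in Step 2 is tight enough to survive the union bound.

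\textbf{Main obstacle.} The hardest step is the expectation estimate in Step 1: producing the exact $\d$-exponent $(1+\e)/(r-1)$. This exponent does not coincide with the natural exponent $r/2-k+N$ of any single history type, so the proof must balance several contributions, with the $\e$-dependence likely entering through a cutoff or interpolation parameter. A secondary issue is the asymptotic independence of pairs of edges needed in Step 2, which requires handling correlations between nearby edges (sharing a vertex or being close in $G_r$) carefully using Lemma \ref{lem:Grproperties}(ii).
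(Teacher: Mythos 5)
Your plan is genuinely different from the paper's, which does not attempt to compute $\E{\Phi(t)}$ by summing indicator probabilities; instead it tracks the \emph{one-step dynamics} of $\Phi(t)$ via an inductive moment-generating-function bound. Conditionally on $[W(t)]$, the paper shows
$$
\Phi(t+1)-\Phi(t) \ \stackrel{d}{=}\ 1 - B\!\left(\tfrac{X_1(t)}{rn-2t}\right) R_t,
$$
where $R_t$ is stochastically dominated by a geometric random variable with success probability $L(t)/\Phi(t)$ — the chance that a traversed green edge of $[W(t)]$ contains a green link, via the P\'olya-urn structure of Section~\ref{sec:uniform}. Iterating the resulting recursion for $f_t(\th)=\E{e^{-\th\Phi(t)}\ind{t}}$ from $t_0$ to $t$, using Corollary~\ref{cor:X1g} at $t_0$, produces a product that evaluates to $\approx(\d/\d_0)^{1/((1-\e_1)(r-1))}$; this is where the exponent comes from. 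It is a fixed point of the drift of $\Phi$ (a solution of the ODE $\Phi' \approx 1 - 2\Phi/\bigl((r-1)(rn-2t)\bigr)$), not a combinatorial quantity, which is why it does not match any history-type exponent $r/2-k+N$.

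The gap in your plan is concentrated in Step 1. Whether a specific edge $e=\{u,v\}$ is green at time $t$ is \emph{not} determined by the vertex histories $\ell$ of $u$ and $v$: the edge is green iff no blue walk re-traversed it after its first discovery, and re-traversal is a global trajectory event, not captured by how many previously-unused configuration points are consumed at each visit. The decomposition by $\ell\in\{0,1,2\}^k$ distinguishes red vs.\ non-red edges at a vertex, but is blind to the green/blue partition of the discovered edges — yet that partition is exactly what $\Phi(t)$ measures. So this is not a matter of balancing several history-type contributions; the event $\{e\text{ green}\}$ simply does not admit the proposed decomposition. Step 2 inherits a related problem: the events $\{e_1\text{ green}\}$ and $\{e_2\text{ green}\}$ are coupled through the shared blue-walk trajectory and not merely through local structure of $G_r$, so "asymptotic independence of far-apart edges'' — sound for static subgraph counts in $G_r$, as in Lemmas~\ref{lem:Ztbound} and~\ref{lem:Xr} — does not transfer to this trajectory-dependent event. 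And to survive the union bound over $\Theta(n)$ values of $t$ you would need moment bounds of order $m=\Omega(\log n)$, which is considerably more than the "small $m$'' factorization you describe; the paper sidesteps this entirely because the MGF recursion already delivers exponential tails.
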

With $\e > 0$ small enough so that $(r-1)/(r+\e) > 3/4$ and $(1+\e)/(r-1) < 1/2$, Lemma \ref{lem:greenedges} implies statement (ii) of Lemma \ref{lem:concentration}: if $t_1 \leq t \leq t_3$ then w.h.p., $\F(t) \geq n\d^{1/2}$.
\begin{proof}[Proof of Lemma \ref{lem:greenedges}]
Firstly, let us see how $\F(t)$ changes with time. Fix $\e_1 > 0$ such that
\begin{equation}\label{eq:eps1def}
\frac{1}{(1-\e_1)(r-1)} < \frac{1+\e}{r-1},
\end{equation}
let
$$
\XX(t) = \left\{X_1^g(t) \geq (1-\e_1)(rn-2t)\right\}
$$
and let $\ind{t}$ denote the indicator variable for $\XX(t)$. We note that with $\l = 1/\log n$, by Corollary \ref{cor:X1g}
\begin{equation}\label{eq:etadef}
\Prob{\ol{\XX(t)}} \leq \frac{\E{e^{-\l X_1^g(t)}}}{e^{-\l (1-\e_1)(rn-2t)}} \leq \exp\left\{-\frac{\e_1 n\d_\e}{\log n}\right\} =: \eta,
\end{equation}
for any $t \leq t_\e$.
\begin{claim}\label{cl:mgf}
For $0 < \th \leq \d_\e\log^{-2}n$, $\e_1 > 0$ and $t_0 \leq t \leq t_\e$,
$$
\E{e^{-\th(\F(t+1) - \F(t))}\ind{t}\mid [W(t)]} \leq \exp\left\{\frac{2\th\F(t)}{(1-\e_1)(r-1)(rn-2t)}(1+O(\g))\right\}\ind{t},
$$
with $\g = o(\log^{-1} n)$.
\end{claim}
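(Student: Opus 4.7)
The plan is to write $\F(t+1) - \F(t) = 1 - g$, where $g$ denotes the number of green edges that turn blue during the walk in the interval $(C(t), C(t+1)]$; since $g \geq 0$, this reduces the claim to bounding $\E{e^{\th g}\ind{t}\mid[W(t)]}$. Conditional on $[W(t)]$ the current position $x_{C(t)}$ is determined, and the partner $\m(x_{C(t)})$ revealed at step $C(t)+1$ is uniform over the $rn - 2t + 1$ unvisited configuration points, so I would decompose by the destination vertex $v$. If $v \in X_i(t)$ with $i \geq 2$ then $v$ retains an unvisited point after step $C(t)+1$, the next within-vertex step immediately exposes a new edge, and $g = 0$. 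If $v \in X_1(t)$ a random walk phase starts on $G(t)$; on $\ind{t}$ this case has conditional probability $X_1(t)/(rn-2t+1)$, with almost all of its mass on green vertices $v \in X_1^g(t)$ because $X_1^g(t) \geq (1-\e_1)(rn-2t)$ and $|Z(t)|$ is negligible (so the contribution from $v \in X_1^b$ is handled by a crude bound).

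The main case is $v \in X_1^g$. By the definition of a green vertex, every one of $v$'s $r$ configuration points is incident to a green edge by step $C(t)+2$, so the matching step at $C(t)+3$ deterministically destroys a green edge. The walk then continues on $G(t)$ until it hits $\ol X(t)\setminus\{v\}$, and I would apply Lemma \ref{lem:sprinkling}'s root-set conclusion (which holds on $\ind{t}$ by Lemma \ref{lem:concentration}) together with Lemma \ref{roothitting}: $G^*(t)$ is locally an $(r-1)$-regular tree near the supervertex, so the expected walk length is at most $\frac{r}{r-2}\cdot\frac{n}{(1-\e_1)(rn-2t)}(1+o(1))$ on $\ind{t}$ using $|\ol X(t)|\geq X_1^g(t)$. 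Each edge traversal past the forced first destroys a green edge with conditional probability equal to (green count at current vertex)$/(r-1)$, averaging by stationarity on $G(t)$ to $2\F(t)/((r-1)\cdot 2t)$ per step. Multiplying yields $\E{g\mid v\in X_1^g,[W(t)]} \leq 1 + \frac{2\F(t)}{(1-\e_1)(r-1)(rn-2t)}(1+o(1))$.

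To upgrade from expectation to moment generating function I would invoke the inequality $\E{e^{\th g}}\leq \exp((e^{\th}-1)\E{g})\leq \exp(2\th\E{g})$, valid for $\th = o(1)$ when $g$ is a sum of approximately independent Bernoulli contributions. Combining with the $e^{-\th}$ factor (which cancels the deterministic $+1$ gain from the new green edge) and weighing by the case probabilities gives the stated $\exp\{\frac{2\th\F(t)}{(1-\e_1)(r-1)(rn-2t)}(1+O(\g))\}$ bound. The principal obstacle is the approximate-independence step: green-to-blue transitions are genuinely dependent because a destroyed edge is no longer green for subsequent traversals, and the walk spends $O(\log n)$ pre-mixing steps in the neighborhood of $v$. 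I would handle this by coupling the true process with an oblivious walk on an $(r-1)$-regular tree in which traversed green edges are immediately refreshed, and bound the coupling error by the $O(\om r^\om/\f(t))$ short-cycle and link-collision count from \eqref{eq:linkunionbound}; this yields the $(1+O(\g))$ multiplicative correction in the exponent with $\g = o(\log^{-1}n)$.
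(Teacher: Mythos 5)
Your decomposition of $\F(t+1)-\F(t) = 1-g$ and the case analysis on the destination vertex $v$ match the paper's opening moves, but the handling of the main case $v\in X_1^g$ diverges and contains gaps serious enough to invalidate the argument.

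The most fundamental problem is circularity. You invoke Lemma \ref{lem:sprinkling} and Lemma \ref{roothitting} to bound the blue-walk length, and Lemma \ref{lem:sprinkling} requires the hypothesis $\F(t)\geq n\d^{1/2}$. That hypothesis is exactly Lemma \ref{lem:concentration}(ii), which is proved via Lemma \ref{lem:greenedges}, whose proof rests on the very claim you are trying to prove. The event $\ind{t}$ only controls $X_1^g(t)$; it says nothing about $\F(t)$, and in the inductive MGF iteration following this claim one cannot simply assume a lower bound on $\F(t)$. The paper's proof is designed precisely to avoid this: it uses only $\ind{t}$ and the P\'olya-urn exchangeability of the green-link placement, never any hitting-time machinery.

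The estimate ``each edge traversal past the forced first destroys a green edge with conditional probability $\cdots$ averaging by stationarity on $G(t)$ to $2\F(t)/((r-1)\cdot 2t)$ per step'' is also not sound. The blue walk is not started from stationarity on $G(t)$: it is started at a green vertex $v$ all of whose $r$ incident edges are green, inside a neighbourhood that is almost entirely green, so for the first $O(\om)$ steps the green-traversal rate is far above the stationary proportion $\F(t)/t$. Multiplying an expected walk length by a stationary traversal rate therefore underestimates the initial burst of destruction, and an upper bound on $\E g$ does not follow; moreover the $(r-1)$ in your denominator has no clear provenance, and tracing the constants through (walk length $\sim \frac{r}{r-2}\cdot\frac{n}{(1-\e_1)(rn-2t)}$ times your rate) does not reproduce the claimed $\frac{2\F(t)}{(1-\e_1)(r-1)(rn-2t)}$ coefficient.

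Finally, the passage from $\E g$ to $\E e^{\th g}$ is not something you can get from $\E{e^{\th g}}\leq\exp\{(e^\th-1)\E g\}$ for a general nonnegative integer-valued $g$; that inequality needs $g$ to be a sum of independent (or negatively associated) indicators, and you yourself flag the dependence as ``the principal obstacle.'' The paper sidesteps the whole issue: conditional on $[W(t)]$ the green links are placed into green edges by a P\'olya urn, so each contracted green edge the blue walk traverses is a link-carrier with probability at least $L(t)/\F(t)$, and hence the number of green edges destroyed is stochastically dominated by a single geometric random variable with that success probability. The geometric domination gives the moment generating function bound directly, with no mixing estimates, no coupling to an oblivious tree walk, and no appeal to Lemma \ref{roothitting}. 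If you want to repair your proposal along its own lines you would need to (a) remove every reliance on lower bounds for $\F(t)$, (b) replace the stationarity heuristic with a bound valid for a walk started at $v$, and (c) replace the approximate-independence MGF step with genuine stochastic domination — at which point you essentially arrive at the paper's P\'olya-urn argument anyway.
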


\begin{proof}
Condition on a $[W(t)]$ such that $X_1^g(t) \geq (1-\e_1)(rn-2t)$. If the next edge is added without entering a blue walk, then $\F(t+1) = \F(t) + 1$. So,
$$
\Prob{\F(t + 1) - \F(t) = 1 \mid [W(t)]} = 1 - \frac{X_1(t)}{rn-2t}.
$$
Suppose the new edge chooses a vertex of $X_1(t)$, thus entering a blue walk. We may view this as a walk on $[W(t)]$, and any time a green edge is traversed, we ask if the green edge in $[W(t)]$ contains a green link in $W(t)$, in which case the blue walk ends. If not, the green edge turns blue and $\F$ decreases by one.

There are $L(t) = \frac{r-1}{2}X_1^g(t)$ green links, distributed into the $\F(t)$ green edges by a P\'olya urn process as discussed in Section \ref{sec:uniform}. Suppose $e_1,e_2,\dots,e_\ell$ are green edges in $[W(t)]$, and let $K_1,K_2,\dots,K_\ell$ be the lengths of the corresponding paths in $W(t)$, corresponding to the first $\ell$ entries of a vector $(k_1,\dots,k_\f)$ drawn uniformly at random from all vectors with $k_i\geq 1$ and $\sum_{i=1}^\f k_i = \F(t)$. The probability that none of the $\ell$ edges contains a green link is exactly
$$
\Prob{K_i = 1 \text{ for } i = 1,2,\dots,\ell} = \prod_{i=1}^\ell \frac{\binom{\F-i-1}{\f-i-1}}{\binom{\F-i}{\f-i}} 
= \prod_{i=1}^\ell\left(1 - \frac{L(t)}{\F(t)-i}\right)\leq \left(1 - \frac{L(t)}{\F(t)}\right)^\ell.
$$
This shows that the number of green edges visited before discovering a green link can be bounded by a geometric random variable. If a green edge is visited without a discovery, that edge turns blue. Note that the blue walk may also end when a vertex of $X_i^b$ is found for some $i\geq 1$; we are upper bounding the number of green edges visited.

So in distribution,
$$
\F(t+1) - \F(t) \stackrel{d}{=} 1 - B\bfrac{X_1(t)}{rn-2t} R_t
$$
where $B(p)$ denotes a Bernoulli random variable taking value $1$ with probability $p$, and $R_t$ is stochastically dominated above by a geometric random variable with success probability $L(t) / \F(t)$. The two random variables on the right-hand side are independent. So
$$
\E{e^{-\th(\F(t+1)-\F(t))}\mid [W(t)]} = e^{-\th}\left(1 - \frac{X_1(t)}{rn-2t} + \frac{X_1(t)}{rn-2t}\E{e^{\th R_t}\mid [W(t)]}\right)
$$
The map $x\mapsto e^{\th x}$ is increasing for $\th > 0$, so we can couple $R_t$ to a geometric random variable $S_t$ with success probability $L(t)/\F(t)$ in such a way that
$$
\E{e^{\th R_t}\mid [W(t)]} \leq \E{e^{\th S_t}\mid [W(t)]}.
$$
As $S_t$ is geometrically distributed and $X_1^g(t) \geq (rn-2t)/2$ by conditioning on $\XX(t)$,
$$
\E{e^{\th S_t}\ \middle|\ [W(t)]} = 1 + \th\frac{\F(t)}{L(t)} - O\bfrac{\th^2 \F(t)^2}{L(t)^2} = 1 + \th\frac{\F(t)}{L(t)} (1 + O(\g)).
$$
Conditioning on $X_1^g(t) \geq (1-\e_1)(rn-2t)$ implies that $L(t) = \frac{r-1}{2}X_1^g(t) = \OM(n\d)$, so
$$
\g := \th\frac{\F(t)}{L(t)} \leq \d_\e\log^{-2}n \frac{n}{\OM(n\d_\e)} = o(\log^{-1} n).
$$
We also have $X_1^b(t) \leq rn-2t - X_1^g(t) \leq \e_1(rn-2t)$, so
$$
\frac{X_1(t)}{L(t)} = \frac{X_1^g(t)}{L(t)} + \frac{X_1^b(t)}{L(t)} \leq \frac{2}{r-1} + \frac{\e_1(rn-2t)}{(1-\e_1)\frac{r-1}{2}(rn-2t)} = \frac{2}{(1-\e_1)(r-1)}.
$$
So for $[W(t)]\in \XX(t)$,
\begin{align}
& \E{e^{-\th(\F(t+1)-\F(t))}\ind{t}\mid [W(t)]} \\
& \leq e^{-\th}\left(1 - \frac{X_1(t)}{rn-2t} + \frac{X_1(t)}{rn-2t}\left(1 + \th\frac{\F(t)}{L(t)}(1 + O(\g))\right)\right) \\
& \leq \left(1 + \th\frac{2\F(t)}{(1-\e_1)(r-1)(rn-2t)}(1 + O(\g))\right) \\
& \leq \exp\left\{\frac{2\th\F(t)}{(1-\e_1)(r-1)(rn-2t)}(1+O(\g))\right\}.
\end{align}
\end{proof}
Define for $0 < \th = o(1)$,
$$
f_t(\th) = \E{e^{-\th \F(t)}\ind{t}}.
$$
As $\F(t) \geq L(t) = \frac{r-1}{2}X_1^g(t)$ we have for $0 < \th = o(1)$, by Corollary \ref{cor:X1g},
\begin{equation}\label{eq:ft0bound}
f_{t_0}(\th) \leq \E{e^{-\th \F(t_0)}} \leq \E{e^{-\th \frac{r-1}{2}X_1^g(t)}} = \exp\left\{-\th \frac{r-1}{2} rn\d_0(1 + o(1))\right\}.
\end{equation}
Claim \ref{cl:mgf} shows that for $t_0 \leq t < t_\e$,
\begin{align}
f_{t+1}(\th) & = \E{e^{-\th\F(t + 1)}\ind{t}} + \E{e^{-\th\F(t + 1)}(\ind{t+1}-\ind{t})} \\
& \leq \E{e^{-\th\F(t)}\E{e^{-\th(\F(t+1) - \F(t))}\ind{t}\mid [W(t)]}} + \E{\ind{t+1}} \\
& \leq \E{\exp\left\{-\th\F(t)\left(1 - \frac{2(1 + O(\g))}{(1-\e_1)(r-1)(rn-2t)}\right)\right\}} + \eta \\
& = f_t\left(\th\left(1 - \frac{2(1+O(\g))}{(1-\e_1)(r-1)(rn-2t)}\right)\right) + \eta
\end{align}
where $\eta = \exp\{-\e_1 n\d_\e/\log n\}$ is an upper bound for $\Prob{\ol{\XX(t+1)}}$, as defined in \eqref{eq:etadef}. As $\g = o(\log^{-1} n)$, repeating the calculations in \eqref{eq:exbound} and \eqref{eq:mbound}, we have
$$
\prod_{s = t_0}^{t-1} \left(1 - \frac{2(1 + O(\g))}{(1-\e_1)(r-1)(rn-2s)}\right) \sim \bfrac{rn-2t}{rn-2t_0}^{\frac{1}{(1-\e_1)(r-1)}}.
$$
It follows by induction and from \eqref{eq:ft0bound} that if $L(t) = n\d^{\frac{1+\e}{r-1}}$,
\begin{align} 
f_t(\th) & \leq f_{t_0}\left(\th\prod_{s=t_0}^{t-1} \left(1 - \frac{2(1+O(\g))}{(1-\e_1)(r-1)(rn-2s)}\right)\right) + (t-t_0)\eta \\
& \leq \exp\left\{-\th rn\d_0\bfrac{\d}{\d_0}^{\frac{1}{(1-\e_1)(r-1)}}\right\} + (t-t_0)\eta
\end{align}
Now, $\e_1$ was chosen in \eqref{eq:eps1def} to satisfy $1/(1-\e_1)(r-1) < (1+\e)/(r-1)$. The $\d_0 = 1/\log\log n$ factors are insignificant compared to those involving $\d \leq \log^{-1/2} n$, and we have
$$
n\d_0\bfrac{\d}{\d_0}^{\frac{1}{(1-\e_1)(r-1)}} > n\d^{\frac{1+\e}{r-1}} = L(t),
$$
which implies
$$
f_t(\th) \leq e^{-r \th L(t)} + n\eta
$$

Now, setting $\th = \d_\e\log^{-2} n$, using the bound $\ind{\{X > a\}} \leq X/a$,
\begin{align}
\Prob{\F(t) < L(t)} & \leq \Prob{\ol{\XX(t)}} + \Prob{\F(t) < L(t),\ \XX(t)}  \\
& \leq \eta + \E{\ind{\left\{e^{-\th\F(t)} > e^{-\th L(t)}\right\}}\ind{t}} \\
& \leq \eta + e^{\th L(t)} f_t(\th) \\
& = O(ne^{\th L(t)}\eta) + e^{-\th(r-1) L(t)}. \label{eq:part1}
\end{align}
We bound the two terms in \eqref{eq:part1} separately. Firstly,
\begin{equation}\label{eq:part2}
ne^{\th L(t)}\eta = n \exp\left\{\frac{\d_\e}{\log^2 n} n \d^{\frac{1+\e}{r-1}} - \frac{\e_1 n\d_\e}{\log n}\right\} \leq n \exp\left\{n\d_\e\left(\frac{\d_1^{\frac{1+\e}{r-1}}}{\log^2 n} - \frac{\e_1}{\log n}\right)\right\} 
\end{equation}
and as $\d = o(1)$ and $n\d_\e / \log n = \widetilde{\OM}\left(n^{\frac{1+\e}{r+\e}}\right)$, we have $ne^{\th L(t)}\eta = o(n^{-1})$. Secondly, for $\d \geq \d_\e = (n^{-1}\log^4 n)^{(r-1)/(r+\e)}$,
\begin{equation}\label{eq:part3}
e^{-\th(r-1) L(t)} = \exp\left\{-(r-1)\frac{\d_\e}{\log^2 n} n \d^{\frac{1+\e}{r-1}}\right\} \leq \exp\left\{- \frac{n \d_\e^{\frac{r+\e}{r-1}}}{\log^2 n}\right\} = e^{-\log^2 n},
\end{equation}
so combining \eqref{eq:part1}, \eqref{eq:part2} and \eqref{eq:part3}, we conclude
$$
\Prob{\exists t_1 \leq t \leq t_\e : \F(t) < L(t)} \leq n \left(o(n^{-1}) + O(e^{-\log^2 n})\right) = o(1).
$$
\end{proof}


\begin{thebibliography}{99}
\bibitem{AKS} M. Ajtai, J. Koml\'os and E. Szemer\'edi, Deterministic simulation in LOGSPACE, {\em Proceedings of STOC 1987}, 132-140.
\bibitem{af} D. Aldous and J. Fill, {\em Reversible Markov Chains and Random Walks on Graphs}, \texttt{http://stat-www.berkeley.edu/pub/users/aldous/RWG/book.html}
\bibitem{bcf} P. Berenbrink, C. Cooper and T. Friedetzky, Random walks which prefer unvisited edges: exploring high girth even degree expanders in linear time, {\em Random Structures and Algorithms} 46(1) (2013), pages 36--54.
\bibitem{bollobas} B. Bollob\'as, A probabilistic proof of an asymptotic formula for the number of labelled regular graphs, {\em European Journal of Combinatorics}, 1(4) (1980), 311--316.
\bibitem{CFNet} C. Cooper and A. Frieze, Vacant sets and vacant nets: Component structures induced by a random walk, {\em SIAM Journal on Discrete Mathematics} (2016) 
\bibitem{CFreg} C. Cooper and A. M. Frieze, The cover time of random regular graphs, {\em SIAM Journal on Discrete Mathematics}, 18 (2005) 728-740.
\bibitem{CFJ} C. Cooper, A. M. Frieze and T. Johansson, The cover time of a biased random walk on a random cubic graph, to appear in {\em Proceedings of AofA 2018}. Full paper preprint: \url{https://arxiv.org/abs/1801.00760}
\bibitem{JF}  J. Friedman, A proof of {A}lon's second eigenvalue conjecture and related problems, Memoirs of the American Mathematical Society, 2008.
\bibitem{FK} A.M. Frieze and M.Karo\'nski, Introduction to Random Graphs, Cambridge University Press, 2015.
\bibitem{JS} M.R. Jerrum and A. Sinclair, Approximate Counting, Uniform Generation and Rapidly Mixing Markov Chains, {\em Information \& Computation} 82 (1989) 93-133.
\end{thebibliography}
\end{document}